\numberwithin{equation}{section}
 \renewcommand\section{\@startsection {section}{1}{\z@}%
     {-4.5ex \@plus -1ex \@minus -.2ex}%
     {2.3ex \@plus.8ex}%
    {\centering\scshape}}
\def\C{\mathcal{C}}
\def\mcO{\mathcal{O}}
\def\T{\mathcal{T}}
\def\W{\mathcal{W}}
\def\PP{\mathbb{P}}
\def\ov#1{\overline{#1}}
\def\lra{\longrightarrow}
\def\End{\operatorname{End}}
\def\coker{\operatorname{Coker}}
\def\ker{\operatorname{Ker}}
\def\im{\operatorname{Im}}
\def\oc2{\mathcal{O}_{\C_2}}
\newcommand{\dblq}{{/\!/}}
\newtheorem{theorem}{Theorem}[section]
\newtheorem{prop}[theorem]{Proposition}
\newtheorem{lem}[theorem]{Lemma}
\theoremstyle{definition}
\newtheorem{defin}[theorem]{Definition}
\newtheorem{rem}[theorem]{Remark}
\newtheorem{cor}[theorem]{Corollary}
\newcommand{\be}{\begin{equation}}
\newcommand{\ee}{\end{equation}}
\begin{document}

\title {Explicit Brill-Noether-Petri general curves}

\author{Enrico Arbarello}
\address{Enrico Arbarello: Dipartimento di Matematica Guido Castelnuovo, Universit\`a di Roma Sapienza
\hfill
\indent Piazzale A. Moro 2, 00185 Roma, Italy} \email{{\tt ea@mat.uniroma1.it}}
\author{Andrea Bruno}
\address{Andrea Bruno: Dipartimento di Matematica e Fisica, Universit\`a Roma Tre
\hfill \newline\texttt{}  \indent Largo San Leonardo Murialdo 1-00146 Roma, Italy} \hfill \newline\texttt{}
 \email{{\tt bruno@mat.uniroma3.it}}

 \author{Gavril Farkas}
\address{Gavril Farkas: Institut f\"ur Mathematik, Humboldt-Universit\"at zu Berlin \hfill \newline\texttt{}
\indent Unter den Linden 6,
10099 Berlin, Germany}
\email{{\tt farkas@math.hu-berlin.de}}
 \author{Giulia Sacc\`a}
\address{Giulia Sacc\`a: Department of Mathematics, Stony Brook University \hfill
\indent \newline\texttt{}
\indent Stony Brook, NY 11794-3651, USA}
 \email{{\tt giulia.sacca@stonybrook.edu    }}

\begin{abstract}Let $p_1,\dots, p_9$ be the
points in $\mathbb A^2(\mathbb Q)\subset \mathbb P^2(\mathbb Q)$ with coordinates
$$(-2,3),(-1,-4),(2,5),(4,9),(52,375), (5234, 37866),(8, -23), (43, 282), \Bigl(\frac{1}{4}, -\frac{33}{8} \Bigr)$$ respectively.
 We  prove that, for any genus $g$, a plane curve of degree $3g$ having   a $g$-tuple point at $p_1,\dots, p_8$, and a $(g-1)$-tuple
point at $p_9$, and no other singularities, exists and is a Brill-Noether general curve of genus $g$, while a general  curve in that $g$-dimensional linear system
is a Brill-Noether-Petri general curve of genus $g$.
\end{abstract}
%

\maketitle
\section{Introduction.}

The Petri Theorem asserts that for a general curve $C$ of genus $g>1$, the multiplication map
$$\mu_{0,L}:H^0(C,L)\otimes H^0(C, \omega_C\otimes L^{-1})\rightarrow H^0(C,\omega_C)$$
is injective for every line bundle $L$ on $C$. While the result, which immediately implies the Brill-Noether Theorem,  holds for almost every curve $[C]\in \mathcal{M}_g$, so far no explicitly computable examples of smooth curves of arbitrary genus satisfying this theorem have been known. Indeed, there are two types of known proofs
of the Petri Theorem. These are: the proofs by degeneration due to  Griffiths-Harris \cite{GH}, Gieseker \cite{Gieseker}, and Eisenbud-Harris \cite{eis-harris}, or the recent proof using tropical geometry \cite{Payne}, which by their nature, shed little light on the explicit smooth curves which are Petri general; and the elegant proof by Lazarsfeld \cite{Lazarsfeld}, asserting that every hyperplane section of a polarised $K3$ surface $(X,H)$ of degree $2g-2$, such that the hyperplane class $[H]$ is indecomposable is a Brill-Noether general curve, while a general curve in the linear system $|H|$ is Petri general. However, there are no known concrete examples of polarised $K3$ surfaces of arbitrary degree  satisfying the requirement above.   It is a non-trivial instance of a theorem of Andr\'e \cite{andre}, \cite{MP}, that there exists polarised $K3$ surfaces of degree $2g-2$ over a number field, having  Picard number one. While the above mentioned results are all in characteristic zero, it has been observed by Welters \cite{Welters} that a minor modification of the proof in \cite{eis-harris}, proves the Petri Theorem in positive characteristic as well.

\vskip 3pt

This work originated from the paper \cite{ABS2}, where a number of explicit families of curves lying on the projective plane or on a ruled  elliptic surface were constructed. For these curves the question of whether they satisfy
the Brill-Noether-Petri condition arises naturally. Among these families one,  already studied by du Val \cite{DuVal}, is particularly interesting. Curves in this family naturally sit on the blow-up of the projective plane in nine points.

\vskip 3pt

The aim of this paper is to show that, by using the methods from \cite{Lazarsfeld} and \cite{Pareschi}, coupled with  Nagata's classical results \cite{Nagata2} on the effective cone of the blown-up projective plane, these curves provide \emph{explicit} examples of Brill-Noether-Petri general curves of any genus. They also provide \emph{computable} examples of Brill-Noether general curves of any genus.

In \cite{Treibich}, Treibich sketches a construction of
Brill-Noether (but not necessarily Petri) curves of any given genus.

We  set the notation we are going to use throughout this note.
We denote by $S'$ the blow-up of $ \PP^2$ at nine points
 $p_1,\dots, p_{9}$ which are {\it $3g$-general} (see the Definition \ref{halph} below),
and we let $E_1,\dots, E_{9}$ be the exceptional curves of this blow-up. We have that
$$
-K_{S'}\sim 3\ell-E_1-\cdots-E_{9}\,,
$$
where $\ell$  is the proper transform of a line in $\PP^2$.
As the points $p_i$ are general, there exists a unique curve
\be\label{j-prime}
J'\in |-K_{S'}|
\ee
 which corresponds to a smooth plane cubic passing through the $p_i$'s. We next consider the linear system on $S'$
$$
L_g:=\bigl|3g\ell-gE_1-\cdots-gE_{8}-(g-1)E_{9}\bigr|.
$$
This is a $g$-dimensional system whose general element is a smooth genus $g$ curve. Since for each curve $C'\in L_g$, we have that $C'\cdot J'=1$,  the point $\{p\}:=C'\cap J'$
is independent of $C'$ and is thus a base point of the linear system $L_g$. Precisely, $p\in J'$ is determined by the equation
$\mathcal{O}_{J'}\bigl(gp_1+\ldots+gp_8+(g-1)p_9+p\bigr)=\mathcal{O}_{J'}(3g\ell_{|J'})$.

\vskip 3pt

Let $\sigma: S\longrightarrow S'$ be the blow-up of $S'$ at $p$, We denote again by $E_1,\dots, E_{9}$ the inverse images of the exceptional curves on $S'$
and by $E_{10}$ the exceptional curve of  $\sigma$. We let $J$ be the strict transform of $J'$ and $C$ the strict transform of $C'$, so that we can write

\be\label{dv}
\aligned
-&K_S\sim J\sim 3\ell-E_1-\cdots-E_{10},\\
&C\sim 3g \ell-gE_1-\cdots-gE_{8}-(g-1)E_{9}-E_{10}\,,\\
&C\cdot J=0\,.
\endaligned
\ee

The linear system $|C|$  is base-point-free and maps $S$ to a surface $\ov S\subset \PP^g$ having canonical sections and a single elliptic
singularity  resulting from the contraction of $J$. As we mentioned above,  this linear system was first studied by Du Val in \cite{DuVal}.
\begin{defin}\label{du-val-curve} A curve in the linear system $|C|$ as in (\ref{dv}) is called a Du Val curve.
\end{defin}
In \cite{ABS2} it is proved that  Brill-Noether-Petri general curves whose Wahl map $$\nu:\bigwedge ^2 H^0(C,\omega_C)\rightarrow H^0(C,\omega_C^{\otimes 3})$$ is not surjective, are hyperplane sections of a $K3$ surface, or limits of such, and  it is shown that one such limit could be the surface $\ov S$ we just described.  This is one of the reasons why it is interesting to determine whether Du Val curves are Brill-Noether-Petri general. In this note we answer this question in the affirmative.

\begin{theorem}\label{duval1}
A general Du Val curve $C\subset S$ satisfies the Brill-Noether-Petri Theorem.
\end{theorem}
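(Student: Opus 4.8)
The plan is to imitate Lazarsfeld's proof of the Petri theorem for curves on $K3$ surfaces, replacing the $K3$ surface by the Du Val surface $\ov S$ (equivalently, by working on its minimal resolution $S$). The essential point is that, although $\ov S$ has an elliptic singularity and is not a $K3$ surface, the curve $C$ is disjoint from the anticanonical curve $J$ that gets contracted, so in a neighbourhood of $C$ the surface behaves like a $K3$: we have $\mcO_S(C)\otimes \omega_S\cong \mcO_S(C-J)$, and since $C\cdot J=0$, adjunction gives $\omega_C\cong \mcO_C(C)$ exactly as in the $K3$ case. Thus $C$ moves in a linear system on $S$ whose general member is smooth of genus $g$, and $\omega_C$ is cut out by $|C|$.

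\medskip

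First I would recall the Lazarsfeld-Mukai construction: given a complete base-point-free $g^r_d$, say $A$ with $h^0(A)=r+1$, one forms on $S$ the rank-$(r+1)$ Lazarsfeld-Mukai bundle $E_{C,A}$ as an extension (or via the evaluation sequence) $0\to E_{C,A}^\vee\to H^0(A)\otimes\mcO_S\to \iota_*A\to 0$, and dually a bundle with $h^0=d-r+1$. One then checks, exactly as in \cite{Lazarsfeld}, that a failure of the Petri condition $\mu_{0,A}$ for $C$ forces $E:=E_{C,A}$ to be non-simple, i.e. $\Hom(E,E)$ has dimension $\ge 2$. The heart of Lazarsfeld's argument is then a Bogomolov-type destabilisation: a non-trivial non-scalar endomorphism of $E$, together with the Hodge index theorem on the surface, produces a sub-line-bundle $M\hookrightarrow E$ and a quotient $E\twoheadrightarrow N$ (modulo torsion) such that the induced class $C$ decomposes as $C\equiv M+N'$ with $h^0(M)\ge 2$, $h^0(N')\ge 2$, and $M\cdot N'$ controlled — in the $K3$ case one gets $C\cdot C = M\cdot C+N'\cdot C$ with both summands nonnegative and the decomposition nontrivial, contradicting the assumed indecomposability of the polarisation. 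I would carry out the same chain of deductions on $S$, keeping careful track of the correction term $K_S=-J$: the Riemann-Roch and Hodge-index estimates pick up contributions of $M\cdot J$ and $N'\cdot J$, but these are controlled because $J^2=0$, $-J$ is nef on the relevant cone, and $C\cdot J=0$.

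\medskip

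The step where the geometry of the nine points actually enters — and the main obstacle — is ruling out the resulting decomposition $C\sim M+N'$ (with $M,N'$ effective, moving, and meeting $C$ nonnegatively). On a genuine $K3$ with indecomposable $H$ this is immediate; here I must show that the specific class $C=3g\ell-gE_1-\cdots-gE_8-(g-1)E_9-E_{10}$ admits no such effective decomposition on $S$. This is exactly where Nagata's results \cite{Nagata2} on the effective cone of $\PP^2$ blown up at general points come in: the $3g$-generality hypothesis (Definition \ref{halph}) guarantees that the only effective classes with nonnegative intersection with the nef classes in play are severely constrained, so any putative splitting $C\sim M+N'$ would violate either effectivity of $M$ or $N'$, or the numerical inequalities forced by Hodge index. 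I would therefore (i) prove the numerical lemma that any effective decomposition $C\sim M+N'$ with $h^0(M), h^0(N')\ge 2$ forces $M$ and $N'$ to be specific nef classes on $S$, and (ii) use Nagata-type inequalities on the self-intersections $-E_i^2$-deficits to show no such classes exist for $3g$-general points. Combining this with the Lazarsfeld machinery yields that $\mu_{0,A}$ is injective for every $A$, hence $C$ is Brill-Noether-Petri general; a dimension count over the $g$-dimensional system $|C|$ (or semicontinuity) promotes this to the general Du Val curve, proving Theorem \ref{duval1}. The hard part is genuinely the interplay between the Bogomolov destabilisation on a rational surface with $K_S\ne 0$ and the arithmetic of Nagata's effective-cone bounds — everything else is a faithful transcription of \cite{Lazarsfeld} and \cite{Pareschi}.
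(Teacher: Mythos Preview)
Your overall strategy --- Lazarsfeld--Mukai bundles, non-simplicity, a decomposition of the curve class, then Nagata --- is indeed the paper's, but there is a genuine numerical gap in your implementation which the paper fixes with a trick you have not anticipated.

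On $S$ (ten points blown up) one has $J^2=K_S^2=9-10=-1$, not $0$ as you assert; thus $J$ is \emph{not} nef, and from $C\cdot J=0$ the splitting $[C]=M+N'$ gives you no control over $M\cdot J$ or $N'\cdot J$. Nagata's result (Proposition~\ref{very-general}) is also a statement about $S'$, not $S$. Moreover, your proposed endgame --- ruling out every effective decomposition $C\sim M+N'$ with $h^0(M),h^0(N')\ge 2$ --- is neither what is needed nor obviously true on $S$; for instance $C\sim gJ+E_9+(g-1)E_{10}$ already shows $C$ is highly decomposable in $\mathrm{Pic}(S)$.

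The paper's fix is to blow down $\sigma:S\to S'$ and use Hartogs' theorem to extend the non-scalar endomorphism $\phi$ of $F_L^\vee$ across the point $p=\sigma(E_{10})$ to an endomorphism $\phi'$ of the analogous bundle $F^\vee$ on $S'$. On $S'$ one has $(J')^2=0$ (so $J'$ is nef) and, crucially, $C'\cdot J'=1$. Writing $[C']=A+B+T$ for the first Chern classes of $\im\phi'$, of the torsion-free part of $\coker\phi'$, and of its torsion, all three are effective with $J'\cdot(-)\ge 0$ summing to $1$; hence at least one of $A,B$ satisfies $J'\cdot A=0$, and Nagata forces $A=kJ'$ for some $k\le g$. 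The contradiction is then \emph{not} that no decomposition exists, but that $A$ is the determinant of a sheaf globally generated away from finitely many points, so $\mcO_{J'}(A)=\mcO_{J'}(kJ')$ has a section --- and this is exactly what the $3g$-Halphen generality condition (\ref{bn}) forbids. In short, the Cremona half of Definition~\ref{halph} forces one summand to be a multiple of $J'$, and the Halphen half then kills it; you need both, and you need to be on $S'$ to see it.
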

This, on the one hand, gives a strong indication that the result in \cite{ABS2} is the best possible. On the other hand, and more importantly, Theorem \ref{duval1} provides a very concrete example of a Brill-Noether-Petri curve for every value of the genus. Since the locus of $3g$-general sets of $9$ points is Zariski open in the symmetric product $(\PP^2)^{(9)}$, we can choose $p_1, \ldots, p_9$ to have rational coefficients. Then Theorem \ref{duval1} implies the following result, which answers a question raised by Harris-Morrison in \cite{HMo} p.343, in connection with the Lang-Mordell Conjecture:

\begin{cor}\label{rat}
For every $g$, there exist smooth Brill-Noether-Petri general curves $C$ of genus $g$ defined over $\mathbb Q$.
\end{cor}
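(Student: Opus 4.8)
The plan is to carry out the entire Du Val construction over $\mathbb{Q}$ and then combine Theorem~\ref{duval1} with the elementary observation that a nonempty Zariski-open subset of a projective space over $\mathbb{Q}$ always contains a $\mathbb{Q}$-rational point, since $\PP^N(\mathbb{Q})$ is Zariski dense in $\PP^N_{\mathbb{Q}}$. As recalled just before the statement, the set of $3g$-general $9$-tuples (Definition~\ref{halph}) is a nonempty Zariski-open subset of the symmetric product $(\PP^2)^{(9)}$, and $(\PP^2)^{(9)}$ is defined over $\mathbb{Q}$; hence one may fix points $p_1,\dots,p_9\in\PP^2(\mathbb{Q})$ that are $3g$-general, and for such a choice the blow-up $S'$ and the linear system $L_g$ are defined over $\mathbb{Q}$.

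The next step is to check that every auxiliary object in the construction descends to $\mathbb{Q}$, the recurring mechanism being \emph{uniqueness}. The anticanonical curve $J'$ of (\ref{j-prime}) is the unique member of $|-K_{S'}|$, i.e.\ the unique plane cubic through $p_1,\dots,p_9$; as the only element of a linear system defined over $\mathbb{Q}$ it is $\mathrm{Gal}(\overline{\mathbb{Q}}/\mathbb{Q})$-invariant, hence defined over $\mathbb{Q}$, and it is a smooth genus-$1$ curve carrying the $\mathbb{Q}$-points $p_i$. Likewise, the base point $p$ of $L_g$ is, as explained in the text, the unique effective degree-$1$ divisor on $J'$ satisfying $\mathcal{O}_{J'}\bigl(gp_1+\cdots+gp_8+(g-1)p_9+p\bigr)=\mathcal{O}_{J'}(3g\,\ell_{|J'})$; this class is defined over $\mathbb{Q}$, and a degree-$1$ divisor class on a genus-$1$ curve has a unique effective representative, so $p$ is a $\mathbb{Q}$-rational point of $J'$. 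Therefore the blow-up $\sigma\colon S\to S'$ at $p$, the divisors $J$ and $C$ of (\ref{dv}), and in particular the $g$-dimensional linear system $|C|\cong\PP^g$, are all defined over $\mathbb{Q}$.

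Finally, let $U\subseteq|C|\cong\PP^g$ be the locus of classes $[D]$ for which $D$ is a smooth irreducible curve that is moreover Brill--Noether--Petri general. Smoothness is an open condition, since $|C|$ is base-point-free with smooth general member, so the singular members form the proper closed discriminant locus; and the Brill--Noether--Petri general locus in $\mathcal{M}_g$ is a nonempty Zariski-open subset --- this is the usual formulation of the Gieseker--Petri theorem, the point being that injectivity of $\mu_{0,L}$ for \emph{every} $L$ fails exactly along a proper closed ``Gieseker--Petri'' locus, cut out by finitely many closed conditions. Pulling this open locus back along the moduli map defined on the smooth part of the family of curves over $|C|$ shows that $U$ is Zariski-open in $\PP^g$, and Theorem~\ref{duval1} shows that $U$ is nonempty (the general Du Val curve of Definition~\ref{du-val-curve} is smooth and satisfies the Petri condition, hence lies in $U$). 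Being a nonempty Zariski-open subset of $\PP^g$ defined over $\mathbb{Q}$, $U$ contains a $\mathbb{Q}$-point, and the curve it parametrizes is a smooth Brill--Noether--Petri general curve of genus $g$ over $\mathbb{Q}$; one may even impose finitely many further open conditions (e.g.\ to land in a chosen affine chart) without destroying nonemptiness. The main obstacles are the descent arguments of the middle step, where one must ensure each intermediate object is characterized uniquely so as to be Galois-invariant, and the openness of the Petri condition in the third step, which is precisely what upgrades the ``general curve'' of Theorem~\ref{duval1} to a full Zariski-open subvariety and hence produces $\mathbb{Q}$-points.
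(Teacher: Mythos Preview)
Your argument is correct and follows exactly the approach the paper indicates in the sentence preceding the corollary: choose $p_1,\dots,p_9\in\PP^2(\mathbb{Q})$ in the Zariski-open locus of $3g$-general tuples, then invoke Theorem~\ref{duval1}. You have simply spelled out the implicit steps the paper leaves to the reader---the descent of $J'$, $p$, $S$, and $|C|$ to $\mathbb{Q}$ via uniqueness, and the passage from ``general'' to an honest Zariski-open (hence $\mathbb{Q}$-pointed) locus in $|C|\cong\PP^g$---so the two proofs are the same in substance.
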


In Section 5 we make Theorem \ref{duval1} and Corollary \ref{rat} explicit by proving that the following set of $9$ points in $\mathbb A^2(\mathbb Q)\subset \mathbb P^2(\mathbb Q)$,  lying on the elliptic curve $y^2=x^3+17$, is $3g$-general, for every $g$, in particular they can be used to construct Petri general curves of any genus:
$$(-2,3),(-1,-4),(2,5),(4,9),(52,375), (5234, 37866),(8, -23), (43, 282), \Bigl(\frac{1}{4}, -\frac{33}{8} \Bigr)$$

\vskip 3pt

We give two proofs of Theorem \ref{duval1}. The first one, in Section 3, uses  \cite{Nagata2} and holds for every $3g$-general set of points $p_1, \ldots, p_9$ in $\PP^2$.
The second proof, presented in Section 4, works only for a general sets of points $p_1, \ldots, p_9$,   and relies on the theory of limit linear series
and the proof of the Gieseker-Petri theorem in \cite{eis-harris}.
\vskip 0.2 cm

\subsection*{Acknowledgements} We are especially grateful to Bjorn Poonen for his help regarding Section 5 and to Edoardo Sernesi for numerous conversations  on the topic of surfaces with canonical sections. We thank Daniele Agostini, Rob Lazarsfeld and Frank-Olaf Schreyer for interesting discussions related to this circle of ideas. The last named author thanks the participants of the Oberwolfach workshop on {\it Singular curves on K3 surfaces and hyperk\"ahler manifolds} for interesting conversations on the same topic. Finally, we thank the referee for very helpful comments on the original version of this paper.

\section{Preliminaries}
As in the introduction, we denote by $S'$ the blow-up of $ \PP^2$ at nine points
 $p_1,\dots, p_{9}$ and let $E_1,\dots, E_{9}$ be the corresponding exceptional curves on $S'$.
We then consider the anticanonical elliptic curve $J'\subset S'$ as in (\ref{j-prime}).

\begin{defin}\label{cremona}
The points $p_1, \ldots, p_9$ are said to be $k$-\emph{Cremona general} for a positive integer $k$, if there exists a single cubic curve passing through them and the surface $S'$ carries no effective $(-2)$-curve of degree at most $k$. The points $p_1, \ldots, p_9$ are
\emph{Cremona general}, if they are $k$-Cremona general for any $k>0$.
\end{defin}

Nagata \cite{Nagata2} has obtained an explicit characterization of the sets of Cremona special sets, which we now explain. A permutation $\sigma \in \mathfrak{S}_9$ gives rise to an isomorphism $\sigma:\mbox{Pic}(S')\rightarrow \mbox{Pic}(S')$ induced by permuting the curves $E_1, \ldots, E_9$. We define the following divisors on $S'$:
$$\mathfrak{A}_1:=\ell-E_1-E_2-E_3, \ \ \ \mathfrak{A}_2:=2\ell-E_1-\cdots-E_6, $$
$$\mathfrak{A}_3:=3\ell-2E_1- E_2-\cdots -E_8 \  \mbox{ and } \ \mathfrak{B}=3\ell-\sum_{i=1}^9 E_i.$$

It is shown in \cite{Nagata2} Proposition 9 and Proposition 10, that a set $p_1, \ldots, p_9$ consisting of distinct points is $k$-Cremona general  if and only if the following conditions are satisfied for all permutations $\sigma\in \mathfrak{S}_9$:
\be\label{cremgenl}
\bigl|\sigma(n\mathfrak{B}+\mathfrak{A_i})\bigr|=\emptyset, \ \mbox{ for all } n\leq \frac{k-i}{3} \ \mbox{ and } \ i=1,2,3.
\ee
Since the virtual dimension of each linear system $\bigl|n\mathfrak{B}+\mathfrak{A}_i\bigr|$ is negative, clearly a very general set of points $p_1, \ldots, p_9$ is Cremona general.

\vskip 4pt

We now recall the following classical definition:
\begin{defin}\label{halph}
The points $p_1, \ldots, p_9$ are said to be $k$-\emph{Halphen special} if
there exists a plane curve of degree $3d\leq k$ having points of multiplicity $d$ at $p_1, \ldots, p_9$ and no further singularities.
We say that the set $p_1, \ldots, p_9$ is $k$-\emph{general} if it is simultaneously $k$-Cremona and $k$-Halphen general.
\end{defin}

The locus of $k$-special points defines a proper Zariski closed subvariety of the symmetric product $(\PP^2)^{(9)}$.
If $p_1, \ldots, p_9$ is a $k$-Halphen special set, then $\mbox{dim } |d J'|=1$, thus $S'\rightarrow \PP^1$ is an elliptic surface with a fibre of multiplicity $d\leq \frac{k}{3}$.
If $\mbox{Halph}(k)\subset (\PP^2)^{(9)}$ denotes the locus of $k$-special Halphen sets, then the quotient $\mbox{Halph}(k)\dblq SL(3)$ is a variety of dimension $9$, see \cite{CD} Remark 2.8.

\vskip 3pt

The relevance of both Definitions \ref{cremona} and \ref{halph} comes to the fore in the following result, which is essentially due to Nagata \cite{Nagata2}, see also \cite{DeFernex10}.

\begin{prop}\label{very-general} The  points  $p_1,\dots, p_{9}$ are $k$-\emph{general} if and only if, for every effective divisor $D$ on $S'$ such that
\be\label{div-very-gen}
D\sim d\ell-\sum_{i=1}^9\nu_i E_i\,,\quad\nu_i\geq0\,,\quad\quad \text{and}\quad D\cdot J'=0,
\ee
where $d\leq k$, one  has $D=mJ'$, for some $m$.
\end{prop}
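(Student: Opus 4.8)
The plan is to prove Proposition \ref{very-general} by unwinding the equivalence into a statement about the root lattice $E_8$ sitting inside $\Pic(S')$. I would first recall that $J'\sim -K_{S'}$ and that the orthogonal complement of $K_{S'}$ in $\Pic(S')$, equipped with the negative of the intersection form, is the root lattice of type $E_8$; the simple roots may be taken to be $\mathfrak A_1=\ell-E_1-E_2-E_3$ together with $E_1-E_2,\dots,E_8-E_9$. An effective divisor $D$ with $D\cdot J'=0$ has $D\sim mJ'+R$ with $R$ in this $E_8$-lattice and $R^2=D^2-2m(D\cdot J')$... more carefully, writing $D\sim d\ell-\sum\nu_iE_i$ with $D\cdot J'=0$ forces $3d=\sum\nu_i$, so $D+K_{S'}$ and hence $D$ itself, modulo multiples of $J'=-K_{S'}$, lands in $K_{S'}^\perp\cong E_8$.

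Next I would translate the $k$-generality hypothesis, as made explicit in \eqref{cremgenl} and in Definition \ref{halph}, into the statement that $S'$ carries no effective curve class in $K_{S'}^\perp$ of degree $\le k$ other than $0$: the $k$-Cremona condition rules out the $(-2)$-curves $\sigma(n\mathfrak B+\mathfrak A_i)$ (these are exactly the effective roots, up to the Weyl group action, of bounded degree), and the $k$-Halphen condition rules out the imaginary/isotropic classes, namely the multiples $d\mathfrak B = dJ'$ giving a multiple elliptic fibre — but note these are allowed since $\mathfrak B\cdot J'=0$ and $\mathfrak B\sim J'$, so the only thing the Halphen condition excludes is a genuinely new effective class proportional to $J'$ other than integer multiples, which cannot happen, so really the Halphen condition is what guarantees there is no effective multiple-fibre class sneaking in below degree $k$ that is not already $mJ'$. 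The clean formulation I want is: $p_1,\dots,p_9$ are $k$-general $\iff$ the only effective classes $R$ with $R\cdot J'=0$ and $(R\cdot\ell)\le k$ are the $R=mJ'$.

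With that dictionary in place, the proof of the proposition is a short argument. For the forward direction, suppose $D\sim d\ell-\sum\nu_iE_i$ is effective with $\nu_i\ge0$, $D\cdot J'=0$ and $d\le k$. Since $D\cdot J'=0$, the class $D-mJ'$ lies in $K_{S'}^\perp$ for every $m$; choosing $m$ appropriately (e.g. $m=0$, or decomposing via the Weyl group) and invoking Nagata's Propositions 9 and 10 as recorded in \eqref{cremgenl} — which say precisely that none of the classes $\sigma(n\mathfrak B+\mathfrak A_i)$ of degree $\le k$ is effective — together with the $k$-Halphen condition ruling out $d'J'$ for $3d'\le k$ being represented by anything but the obvious multiple, one concludes $D$ must be a nonnegative multiple of $J'$. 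For the converse, if the points fail to be $k$-general, then either they are $k$-Cremona special, in which case some $\sigma(n\mathfrak B+\mathfrak A_i)$ with $3n+i\le k$ is effective — this is a divisor $D\not\sim mJ'$ with $D\cdot J'=(n\mathfrak B+\mathfrak A_i)\cdot J'=0$ of degree $\le k$ — or they are $k$-Halphen special, in which case Definition \ref{halph} directly produces a plane curve of degree $3d\le k$ with multiplicity $d$ at each $p_i$, i.e. an effective $D\sim 3d\ell-d\sum E_i=dJ'$... which \emph{is} a multiple of $J'$, so the contradiction in the Halphen case comes rather from the fact that $\dim|dJ'|=1$, producing a genuinely new effective divisor in the pencil that, after subtracting $J'$'s, is a nonzero effective class orthogonal to $J'$ of bounded degree — equivalently one reduces the Halphen-special case to the Cremona-special case via Nagata.

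The main obstacle I expect is getting the $k$-Halphen half of the dictionary exactly right: the naive effective classes it produces are literally $dJ'$, which \emph{are} allowed by the conclusion of the proposition, so one has to argue that the existence of the pencil $|dJ'|$ (equivalently $\dim|dJ'|\ge1$) forces the existence of a \emph{different} effective class violating the conclusion, and tie this back through the Weyl group to one of Nagata's forbidden classes $\sigma(n\mathfrak B+\mathfrak A_i)$. This is where one has to be careful with the bookkeeping of degrees versus the index $i$ in \eqref{cremgenl} and with the fact that the statement allows arbitrary multiples $mJ'$. The Cremona half, by contrast, is essentially a direct citation of \cite{Nagata2}. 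I would also need the standard fact that on a weak del Pezzo / rational elliptic surface an effective class with $D\cdot(-K)=0$ and $D\ne mJ'$ decomposes as $mJ'$ plus an effective combination of $(-2)$-curves, which follows from Riemann–Roch and the structure of the nef cone — this is the technical engine behind both directions, and it, rather than anything in the statement itself, is the real content being borrowed from Nagata.
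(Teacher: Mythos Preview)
Your proposal has a genuine gap in the handling of the Halphen condition, stemming from a conflation of $D=mJ'$ (equality of divisors) with $D\sim mJ'$ (linear equivalence). The conclusion of the proposition is the former. In the converse direction, if the points are $k$-Halphen special then $\dim|dJ'|\geq 1$ for some $3d\leq k$, so there exists an effective $D\in|dJ'|$ with $D\neq dJ'$; this $D$ already violates the conclusion, since $D\sim dJ'$ forces $m=d$ whenever $D=mJ'$, yet $D\neq dJ'$ by choice. There is nothing to subtract and no reduction to the Cremona case --- your claim that ``after subtracting $J'$'s'' one obtains a nonzero effective class orthogonal to $J'$ is simply false, because $D-mJ'\sim(d-m)J'$ remains a multiple of $J'$ as a class for every $m$. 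Symmetrically, in the forward direction the role of the Halphen hypothesis is precisely to upgrade the conclusion $D\sim mJ'$ (which is all that Cremona generality alone can give) to the conclusion $D=mJ'$, via $\dim|mJ'|=0$; your sketch never isolates this step.

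Separately, the paper's forward argument is far simpler than your root-lattice route: reduce to $D$ irreducible, then the Hodge Index Theorem (using $D\cdot J'=0$ and $(J')^2=0$) gives $D^2\leq 0$; adjunction then forces either $D$ to be a $(-2)$-curve, excluded by $k$-Cremona generality, or $D^2=0$, whence $D$ is numerically proportional to $J'$ and so $D\in|J'|$. The ``standard fact'' you invoke at the end --- that effective classes in $K_{S'}^\perp$ decompose as $mJ'$ plus effective $(-2)$-curves --- is exactly this Hodge Index argument in disguise, and once you have it the detour through the Weyl group and Nagata's explicit list (\ref{cremgenl}) becomes unnecessary: the bare definition of $k$-Cremona generality (no $(-2)$-curve of degree $\leq k$) is what is used, with no need to identify which $(-2)$-classes might occur.
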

\begin{proof}
Clearly we may assume  that $D$ is  irreducible. From the Hodge Index Theorem, it follows that $D^2\leq 0$. If $D^2<0$, then by adjunction $D$ is a smooth rational curve with $D^2=-2$. But $S'$ has no $(-2)$-curves of degree at most $k$, for $p_1, \ldots, p_9$ are $k$-Cremona general. If $D^2=0$, then applying again the Hodge Index Theorem we obtain that $D^{\perp}=K_{S'}^{\perp}$, therefore $D\in |J'|$. Thus, for an arbitrary effective divisor $D$, with $D\cdot J'=0$, we get that
$D\in |mJ'|$, for some positive integer $m\leq \frac{k}{3}$. From the $k$-Halphen generality condition, we obtain $\mbox{dim } |mJ'|=0$, hence $D=mJ'$.
The reverse implication follows directly from the definition of a $k$-general nine-tuple of points.
\end{proof}

Recall Definition \ref{du-val-curve}.
\begin{lem}\label{duval-smooth}
If the points $p_1, \ldots, p_9$ are $3$-general, a general Du Val curve  of genus $g$ is smooth and
irreducible.
\end{lem}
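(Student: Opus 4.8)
The plan is to deduce both assertions from a single input: that the linear system $|C|$ on $S$ is base-point-free. This is the classical fact due to Du~Val, recalled in the paragraph preceding Definition~\ref{du-val-curve}; in the stated generality it follows from Proposition~\ref{very-general}, and this is the only place where the $3$-generality hypothesis is genuinely used. We may assume $g\geq 2$, the case $g=1$ being elementary. As a preliminary computation, from (\ref{dv}) one reads off $C^2=9g^2-8g^2-(g-1)^2-1=2g-2$ and $C\cdot K_S=-C\cdot J=0$, so $p_a(C)=g$ by adjunction, and, once base-point-freeness is in hand, $C$ is big and nef.

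For smoothness I would apply Bertini's theorem directly: $S$ is a smooth projective surface over a field of characteristic zero and $|C|$ is base-point-free, hence a general member $C\in|C|$ is smooth.

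For irreducibility I would first observe that $|C|$ is not composed with a pencil: since $\varphi_{|C|}^{*}\mathcal O_{\PP^g}(1)=\mathcal O_S(C)$, the inequality $C^2=2g-2>0$ forces the image $\varphi_{|C|}(S)$ to be a surface rather than a curve. As $|C|$ is moreover free of fixed components, Bertini's irreducibility theorem (again in characteristic zero) shows that the general member is irreducible; being also smooth, it is reduced, so a general Du~Val curve is smooth and irreducible. (Alternatively, for connectedness: $C$ being big and nef, Kawamata--Viehweg vanishing gives $H^1\bigl(S,\mathcal O_S(-C)\bigr)\cong H^1\bigl(S,\mathcal O_S(K_S+C)\bigr)^{\vee}=0$, and then the structure sequence $0\to\mathcal O_S(-C)\to\mathcal O_S\to\mathcal O_C\to 0$ forces $h^0(C,\mathcal O_C)=1$ for \emph{every} member of $|C|$; combined with smoothness of a general member this again yields irreducibility.)

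I do not expect a serious obstacle once base-point-freeness is granted---both assertions are then one-line applications of Bertini, and the numerical input $C^2>0$ is immediate. The only place where genuine work is concealed is base-point-freeness of $|C|$ itself: for a merely $3$-general configuration this is exactly what Nagata's results, packaged as Proposition~\ref{very-general}, supply.
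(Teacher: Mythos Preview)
Your argument is correct and follows exactly the paper's route: take base-point-freeness of $|C|$ as the sole input, then apply Bertini together with $C^2=2g-2>0$ to conclude smoothness and irreducibility. The one inaccuracy is your attribution of base-point-freeness: Proposition~\ref{very-general} only constrains effective divisors of degree $\leq 3$ orthogonal to $J'$ and does not by itself yield freeness of the degree-$3g$ system $|C|$; the paper instead invokes Harbourne's results on linear systems on anticanonical rational surfaces (Theorem~3.1 and Corollary~3.4 of \cite{Harbourne}) to obtain that $|C|$ has no fixed part and no base points.
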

\begin{proof}
The linear system $|C|$ on $S$ satisfies the hypothesis of Theorem 3.1 in \cite{Harbourne} and it is then free of fixed divisors.
In particular, since by hypothesis $J$ is fixed, the general element of $|C|$ does not contain $J$. From Corollary 3.4 of  \cite{Harbourne} the
linear system $|C|$ is also base point free. This property together with Bertini's theorem and the fact that $C^2>0$, implies that the general element of $C$ is irreducible and hence smooth.

\end{proof}

\section {A general Du Val curve is a Petri general curve.}

Let $|C|$ and $S$ be as in the Introduction.  By Lemma \ref{duval-smooth}, a general element $C$ of
the  linear system $|C|$ is smooth.
Let $L$ be a base-point-free line bundle on $C$ with $h^0(C,L)=r+1$
and consider the homomorphism $\mu_{0, L}$ given by multiplication of global sections
$$
\mu_{0, L}: H^0(C, L)\otimes H^0(C, \omega_C\otimes L^{-1})\lra H^0(C, \omega_C)
$$
The curve $C$ is said to be a \emph{Brill-Noether-Petri general} curve, if the
map $\mu_{0, L}$ is injective for every line bundle $L$ on $C$. Consider the Lazarsfeld-Mukai bundle defined by the sequence
$$
0 \lra F_L\lra H^0(C, L)\otimes\mcO_S\lra L\lra0.
$$
Note that $H^0(S, F_L)=0$ and $H^1(S, F_L)=0$. Setting, as usual, $E_L:=F_L^{\vee}$, dually, we obtain the exact sequence
\begin{equation}\label{exseq2}
0 \lra H^0(C, L)^\vee\otimes\mcO_S\lra E_L\lra \omega_C\otimes L^{-1}\lra0.
\end{equation}
Here we have used that ${\omega_S}_{|C}=\mathcal O_C$. Clearly $c_1(E_L)=\mathcal{O}_S(C)$, but unlike in the $K3$ situation, on $S$
we have that $H^1(S,E_L)\cong H^0(C,L)^{\vee}$ is $(r+1)$-dimensional (rather than trivial). Following closely Pareschi's proof of  Lazarsfeld's Theorem,   \cite{Pareschi}, \cite{Lazarsfeld}, (see also Chapter XXI, section 7 of \cite{GACII}),
one proves the following lemma.
\begin{lem}If $h^0(S, F_L^\vee\otimes F_L)=1$, then $\ker \ \mu_{0, L}=0$.
\end{lem}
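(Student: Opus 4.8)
The plan is to follow the Lazarsfeld--Pareschi argument for K3 surfaces, adapted to the fact that here $H^1(S,E_L)$ is $(r+1)$-dimensional rather than zero. Recall that the kernel of $\mu_{0,L}$ is related to the vector bundle $F_L^\vee\otimes F_L=\hom(F_L,F_L)$ via the natural identification of $\ker\mu_{0,L}$ with a subspace of $H^0(S,F_L^\vee\otimes F_L)$ coming from the coboundary maps attached to the two defining sequences. More precisely, tensoring the sequence $0\to F_L\to H^0(C,L)\otimes\mcO_S\to L\to 0$ with $F_L^\vee=E_L$ and taking cohomology, together with the dual sequence \eqref{exseq2}, one builds a map whose source is $\Hom(H^0(C,L),H^0(C,L))$-twisted cohomology and whose kernel receives $\ker\mu_{0,L}$.

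\begin{proof}
Consider the bundle $F_L^\vee\otimes F_L=E_L\otimes F_L=\hom(F_L,F_L)$. Tensoring the defining sequence
$$0\lra F_L\lra H^0(C,L)\otimes\mcO_S\lra L\lra 0$$
by $E_L$ gives
\begin{equation}\label{twist1}
0\lra F_L^\vee\otimes F_L\lra H^0(C,L)\otimes E_L\lra E_L\otimes L\lra 0.
\end{equation}
Taking global sections in \eqref{twist1} and using $H^0(S,F_L^\vee\otimes F_L)=\CC$ (the identity endomorphism), together with $H^0(S,E_L\otimes L)$, one obtains a connecting map
$$\partial\colon H^0(S,E_L\otimes L)\lra H^1(S,F_L^\vee\otimes F_L),$$
and a standard diagram chase identifies $\ker\mu_{0,L}$ with a subspace of $\coker\bigl(H^0(C,L)\otimes H^0(S,E_L)\to H^0(S,E_L\otimes L)\bigr)$. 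Here one uses that $H^0(S,F_L)=H^1(S,F_L)=0$, so from \eqref{exseq2} one has $H^0(S,E_L)\cong H^0(C,L)^\vee$, and the multiplication $H^0(C,L)\otimes H^0(S,E_L)\to H^0(S,E_L\otimes L)$ is, up to the identifications coming from \eqref{exseq2} restricted to $C$ and the fact that $E_L\otimes L$ restricts to $\omega_C$ on $C$, precisely dual to the Petri map $\mu_{0,L}$. Thus $\ker\mu_{0,L}\neq 0$ forces a nonzero class in $H^1(S,F_L^\vee\otimes F_L)$ in the image of $\partial$. Now the hypothesis $h^0(S,F_L^\vee\otimes F_L)=1$ means $\End(F_L)$ consists only of scalars, and combined with Riemann--Roch on $S$ (computing $\chi(F_L^\vee\otimes F_L)$ from $c_1(F_L)=-C$, $c_2(F_L)=\deg L$, and $K_S=-J$, together with $C\cdot J=0$) together with the vanishing $H^2(S,F_L^\vee\otimes F_L)=H^0(S,F_L^\vee\otimes F_L\otimes K_S)^\vee$ which one checks directly, one deduces $H^1(S,F_L^\vee\otimes F_L)=0$. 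Hence $\partial=0$ and therefore $\ker\mu_{0,L}=0$.
\end{proof}

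The main obstacle I anticipate is organizing the homological bookkeeping so that the subspace of $H^1(S,F_L^\vee\otimes F_L)$ one lands in is genuinely the whole obstruction, i.e. verifying that the identification $\ker\mu_{0,L}\hookrightarrow H^1(S,F_L^\vee\otimes F_L)/(\text{something})$ is such that $h^0=1$ suffices; in the K3 case Pareschi handles this by a careful analysis of the map $H^0(L)\otimes E_L\to E_L\otimes L$ and its relation to $\mu_{0,L}^\vee$, and the extra $(r+1)$-dimensional $H^1(S,E_L)$ here means one must track where those classes go. A secondary point is the Riemann--Roch / vanishing computation showing $h^0(F_L^\vee\otimes F_L)=1$ already forces $h^1(F_L^\vee\otimes F_L)=0$: this uses $\chi(\hom(F_L,F_L))\le ?$ and Serre duality with $K_S=-J$ effective, so that $h^2(\hom(F_L,F_L))=h^0(\hom(F_L,F_L)(-J))$ vanishes because a nonzero such section would, restricted away from $J$, give a nonscalar endomorphism. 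Once these two points are in place, the conclusion $\ker\mu_{0,L}=0$ is immediate.
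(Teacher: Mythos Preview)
Your argument has a genuine gap at the Riemann--Roch step. You claim that $h^0(S,F_L^\vee\otimes F_L)=1$ together with $h^2=0$ forces $h^1(S,F_L^\vee\otimes F_L)=0$. This would require $\chi(\End(F_L))\geq 1$, but a direct computation on $S$ (a rational surface, so $\chi(\mcO_S)=1$, with $K_S=-J$ and $C\cdot J=0$) gives
\[
\chi\bigl(F_L^\vee\otimes F_L\bigr)=(r+1)^2+2r(g-1)-2(r+1)d,
\]
which is negative already for the most basic Brill--Noether relevant cases: e.g.\ for a $g^1_d$ with $\rho=g-2(g-d+1)=0$, i.e.\ $d=\frac{g+2}{2}$, one gets $\chi=-2$, so $h^1\geq 2$. (Your $h^2=0$ argument is fine: a section of $\End(F_L)(-J)$ is an endomorphism vanishing along $J$, and the identity does not.) Thus the connecting map $\partial$ you construct lands in a genuinely nonzero $H^1$, and nothing in your setup forces it to annihilate $\ker\mu_{0,L}$. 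There is no purely cohomological shortcut here; this is exactly where the argument differs from the naive K3 heuristic, and even on a K3 surface simplicity of $F_L$ does \emph{not} imply $h^1(\End(F_L))=0$.

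The paper's proof proceeds quite differently and uses an ingredient your proposal never invokes: the generality of $C$ in its linear system. One first identifies
\[
H^0(S,F_L^\vee\otimes F_L)\cong H^0\bigl(C,\,F_{L|C}\otimes\omega_C\otimes L^{-1}\bigr)
\]
(tensoring the dual sequence by $F_L$ and using $H^0(F_L)=H^1(F_L)=0$), and then sits this in the exact sequence on $C$
\[
0\lra \mcO_C\lra F_{L|C}\otimes\omega_C\otimes L^{-1}\lra M_L\otimes\omega_C\otimes L^{-1}\lra 0,
\]
where $H^0(M_L\otimes\omega_C\otimes L^{-1})=\ker\mu_{0,L}$. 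The entire content of the lemma is that the coboundary $\delta:\ker\mu_{0,L}\to H^1(C,\mcO_C)$ of this sequence vanishes; once $\delta=0$, the hypothesis $h^0=1$ immediately gives $\ker\mu_{0,L}=0$. The vanishing of $\delta$ is proved by a deformation-theoretic argument: up to scalar, $\delta$ coincides with $\mu_{1,S}=\rho^\vee\circ\mu_1$, where $\mu_1:\ker\mu_{0,L}\to H^0(C,\omega_C^2)$ is the Gaussian map and $\rho$ is the Kodaira--Spencer map of the family of smooth curves in $|C|$. Since $C$ is \emph{general} in $|C|$, the differential of the relative scheme $\W^r_d(f)\to U\subset|C|$ is surjective at $[C,L]$, and its image is contained in $(\operatorname{Im}\mu_{1,S})^\perp$; hence $\mu_{1,S}=0$, so $\delta=0$. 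This use of generality (via Kodaira--Spencer and Pareschi's identification of $\delta$ with $\mu_{1,S}$) is the missing idea in your approach.
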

\begin{proof}
For the benefit of the reader we outline the proof of this Lemma following very closely the treatment in \cite{GACII}.  By tensoring the exact sequence (\ref{exseq2}) by $F_L$ and taking cohomology, since $H^0(S,F_L)=0$ and $H^1(S,F_L)=0$, we obtain
$$
H^0(S, F_L^{\vee}\otimes F_L)\cong H^0\bigl(C, F_{L|C}\otimes \omega_C\otimes L^{-1}\bigr).
$$
The twist by $\omega_C\otimes L^{-1}$ of the restriction $F_{L|C}$ of the Lazarsfeld-Mukai bundle to $C$ sits in an exact sequence
\be
\label{exmu0}
0\lra \mathcal{O}_C\lra F_{L|C}\otimes \omega_C\otimes L^{-1}\lra M_L\otimes \omega_C\otimes L^{-1}\lra 0
\ee
Moreover there is a canonical isomorphism $\ker \ \mu_{0, L}\cong H^0(C, M_L\otimes \omega_C\otimes L^{-1})$. Proposition 5.29 and diagrams (6.1) and (6.2) in \cite{GACII} show that if
$\eta:\W^r_d\to M_g$ is the family of $|L|=g^r_d$'s over moduli, then
 the image of $d\eta$ at a point $[C, L]$, is contained in
$$
(\operatorname{Im}\mu_1)^\perp\subset H^1(C, T_C)
$$
where
$$
\mu_1: \ker \mu_0 \longrightarrow H^0(C, K_C^2)=H^1(C, T_C)^\vee
$$
is the Gaussian map defined by diagram (6.1) in \cite{GACII}. We must show that the  coboundary map $\delta$ of the cohomology sequence (\ref{exmu0}) vanishes.
Let $U \subset |C|$ be the open subscheme parametrising
smooth Du Val curves in the linear system $|C|$ on $S$, and let $f:\C\to U\subset |C|$ be the family of smooth curves parametrised by $U$.
Since $S$ is regular, the characteristic map induces an isomorphism $T_{[C]}(U) \cong H^0(C, N_{C/S})$.
Consider the relative family $$p: \W^r_d(f)\to U$$
Since $p$ is surjective and $C$ is a general element of $U$, the differential
$$
dp: T_{[C,L]}(\W^r_d(f))\longrightarrow T_{[C]}(U)=H^0(C, N_{C/S})
$$
is surjective. Since $K_{S|C}=\mathcal{O}_C$, we have $N_{C/S}=\omega_C$. Let $\rho: H^0(C, N_{C/S})\to H^1(C, T_C)$
be  the Kodaira-Spencer map of the family $f$.
We then get
$$
\operatorname{Im}(\rho\circ dp)\subset \operatorname{Im}(\rho)\cap \operatorname{Im}(\mu_1)^\perp,
$$
hence
$$
\operatorname{Im}( dp)\subset \operatorname{Im}(\rho^\vee\circ \mu_1)^\perp\subset H^0(C, N_{C/S}).
$$
We set
$$
\mu_{1,S}:=\rho^\vee\circ \mu_1 :\ker \mu_0\to H^1(C, \omega_C\otimes  N^\vee_{C/S}).
$$
Since $dp$ is surjective, we get
$$
\mu_{1,S}=0
$$
In Lemma (7.9) of  \cite{GACII}, using only the fact that $K_{S|C}$ is trivial on $C$,  it is proved that $\mu_{1,S}=\delta$
up to multiplication by a nonzero scalar.  Hence  the coboundary map $\delta$ is zero. \end{proof}

\vskip 5pt

Let us go back to the construction of $S$ and $S'$, and recall the role played by the points $p_1,\dots, p_9$.
From the Riemann-Roch theorem on $S'$, these points are $3g$-Halphen general if and only if
\be
\label{bn}
H^0(J',\mcO_{J'}(kJ'))=H^0\Bigl(J, \mcO_J(k(3\ell-E_1-\cdots-E_{9}))\Bigr)=0\,,\quad k=1,\dots, g\,\qquad (J\cong J')
\ee

\vskip 3pt

\begin{theorem}\label{duval11}
 If $p_1, \ldots, p_9$ is a  $3g$-general set, then the general element of  $|C|$ is a Brill-Noether-Petri general curve.
\end{theorem}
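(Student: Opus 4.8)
The strategy is to reduce the Brill-Noether-Petri property of a general Du Val curve $C$ to the hypothesis $h^0(S, F_L^\vee\otimes F_L)=1$ appearing in the Lemma above, and then to verify that hypothesis using the structure of $S$, the vanishing \eqref{bn} coming from $3g$-Halphen generality, and the $3g$-Cremona generality via Proposition \ref{very-general}. So I would first observe that, by the Lemma, it suffices to show that for \emph{every} base-point-free line bundle $L$ on a general $C\in|C|$ the Lazarsfeld-Mukai bundle $F_L$ is simple, i.e. $h^0(S,F_L^\vee\otimes F_L)=1$; the general (non-base-point-free) case of Petri reduces to this by the standard trick of subtracting the base locus and bounding $\ker\mu_{0,L}$ in terms of $\ker\mu_{0,L'}$ for the moving part $L'$, exactly as in Lazarsfeld's and Pareschi's arguments.

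Next I would analyze $\operatorname{End}(F_L)=H^0(S,F_L^\vee\otimes F_L)$. Suppose $\phi:F_L\to F_L$ is a non-scalar endomorphism. Since $F_L$ has rank $r+1$ and $c_1(F_L)=-\mathcal{O}_S(C)$, the characteristic polynomial / eigenvalue argument produces a proper nonzero saturated subsheaf $F'\subset F_L$ that is $\phi$-invariant, hence a destabilizing-type sub-bundle, and a corresponding quotient; writing $c_1(F')=a\ell-\sum b_i E_i$ one gets numerical constraints. The key input is that any such subsheaf, being a kernel or image inside $F_L$, is reflexive hence locally free on the smooth surface $S$, and its first Chern class is constrained by the Bogomolov-type inequality together with $c_1(F_L)=-K_S\cdot(\text{something})$; more precisely, since $\det F_L^\vee=\mathcal O_S(C)$ and $C\cdot J=0$, one shows that the "vertical" part of $c_1(F')$ with respect to the elliptic fibration on $S'$ must be a multiple of $J'$. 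This is exactly where Proposition \ref{very-general} enters: an effective divisor $D$ on $S'$ of degree $\le 3g$ with $D\cdot J'=0$ is forced to be $mJ'$, and then the Halphen vanishing \eqref{bn} shows $h^0(\mathcal O_{J'}(kJ'))=0$ for $k\le g$, which kills the sections that would glue $\phi$ into a non-scalar endomorphism.

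Concretely, the main steps in order: (1) reduce to $L$ base-point-free and to proving $F_L$ simple, invoking the Lemma; (2) compute $h^0$ and $h^1$ of $F_L$ and its twists, and record $c_1(F_L)=-[C]$, $c_2(F_L)=\deg L$, together with $C\cdot J=0$; (3) given a non-scalar $\phi\in\operatorname{End}(F_L)$, extract an invariant saturated subsheaf and translate the inequality $\operatorname{rk}(F')c_1(F_L)-\operatorname{rk}(F_L)c_1(F')$ into an effective divisor class of controlled degree on $S'$ orthogonal to $J'$; (4) apply Proposition \ref{very-general} to see this class is $mJ'$, and then use \eqref{bn} to derive a contradiction with the existence of $\phi$; (5) conclude $F_L$ is simple, hence $\ker\mu_{0,L}=0$ by the Lemma, for all $L$, so $C$ is Brill-Noether-Petri general. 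The hardest step will be step (3)–(4): producing the right effective divisor on $S'$ from the endomorphism and checking that its degree is genuinely $\le 3g$ so that Proposition \ref{very-general} and the Halphen vanishing \eqref{bn} apply; this is the point where the specific numerology of the Du Val system $C\sim 3g\ell-gE_1-\cdots-gE_8-(g-1)E_9-E_{10}$ and the relation $C\cdot J=0$ must be used in an essential way, and where one must rule out the subtler possibility that $F_L$ itself splits off a line bundle pulled back from the elliptic pencil.
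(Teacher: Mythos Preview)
Your overall architecture matches the paper's: reduce to simplicity of the Lazarsfeld--Mukai bundle via the Lemma, analyze a hypothetical non-scalar endomorphism, and derive a contradiction by producing an effective divisor orthogonal to the anticanonical elliptic curve, then invoking Proposition~\ref{very-general} and the Halphen vanishing~\eqref{bn}. So the plan is on the right track.

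However, the mechanism you sketch for steps (3)--(4) has genuine gaps and one outright error. First, there is \emph{no} elliptic fibration on $S'$ when $p_1,\dots,p_9$ are Halphen general; the pencil $|dJ'|$ exists only in the Halphen-special case, so any argument invoking ``the vertical part with respect to the elliptic fibration'' cannot be made. Second, the combination $\rk(F')\,c_1(F_L)-\rk(F_L)\,c_1(F')$ coming from a Bogomolov-type inequality is neither obviously effective nor obviously orthogonal to $J'$, so it is unclear how you would feed it into Proposition~\ref{very-general}. Third, working on $S$ with the relation $C\cdot J=0$ is misleading: on $S$ one has $J^2=-1$, so $J$ is not nef and effective divisors can intersect it negatively; you cannot force a summand to have $J$-degree zero there.

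The paper circumvents all of this as follows. After subtracting a scalar, take $\phi$ of non-maximal rank, then \emph{blow down} to $S'$: on $U=S\smallsetminus E_{10}\cong S'\smallsetminus\{p\}$ the bundle $F_L^\vee$ agrees with the analogous bundle $F^\vee$ on $S'$, and Hartogs extends $\phi$ to $\phi':F^\vee\to F^\vee$ on all of $S'$. Now set $E=\im\phi'$, $G=\coker\phi'$, $\bar G=G/\mathcal T(G)$, and $A=c_1(E)$, $B=c_1(\bar G)$, $T=c_1(\mathcal T(G))$, so that $[C']=A+B+T$. Since $F^\vee$ is globally generated away from finitely many points, so are $E$ and $\bar G$; hence $A,B,T$ are effective. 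On $S'$ one has $(J')^2=0$ and $J'$ irreducible, so $J'$ is nef and $J'\cdot A,\ J'\cdot B,\ J'\cdot T\ge 0$. The crucial numeric is $C'\cdot J'=1$ (not $C\cdot J=0$): this forces one of $J'\cdot A$ or $J'\cdot B$ to vanish, and Proposition~\ref{very-general} then gives $A=kJ'$ (say). Finally, global generation of $E$ away from points makes $\det E|_{J'}$ effective on $J'$, i.e.\ $h^0(J',\mathcal O_{J'}(kJ'))\neq 0$, contradicting~\eqref{bn}. The degree bound needed for Proposition~\ref{very-general} is automatic because $A$ is a summand of $[C']$, which has degree $3g$. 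I would replace your Bogomolov/saturated-subsheaf step with this image/cokernel decomposition on $S'$; it is both simpler and what actually makes the effectivity and the orthogonality to $J'$ come for free.
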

\begin{proof}

We use the Lemma above. By contradiction, suppose there  is a non-trivial endomorphism
$\phi\in \End(F_L^\vee,F_L^\vee)$. As in Lazarsfeld's proof, we may assume that
$\phi$ is not of maximal rank.
Consider the blow-down $\sigma:  S\to S'$. We have
$$
\sigma(E_{10})=p \,,\qquad \sigma:C\cong\sigma(C)=C' \,,\qquad  \sigma:J\cong\sigma(J)= J'
$$
Notice that
$$
 (J')^2=0\,,\qquad  J'\cdot  C'=1
$$
Let $U:=S\setminus E_{10}\cong  S'\setminus \{p\}=:V$.
Let $F$ be the sheaf defined on $S'$ by the exact sequence
$$
0 \lra F\lra H^0(C, L)\otimes\mcO_{S'}\lra  L\lra 0.
$$
Since
$$
0 \lra H^0(C, L)^\vee\otimes\mcO_{ S'}\lra  F^\vee\lra \omega_C\otimes L^{-1}(p)\lra0
$$
is exact, and $L$ is special,
$F^\vee$ is generated by global sections away from a finite set of points. Consider the restriction
$$
\phi:  F^\vee_{|V}= F^\vee_{L|U}\lra  F^\vee_{L|U}=  F^\vee_{|V}
$$
By Hartogs' Theorem, $\phi$ extends uniquely
to a homomorphism
$$
\phi': F^\vee\lra   F^\vee,
$$
which is non trivial and not of maximal rank. Let
$$
E:=\im \phi'\,,\qquad  G:=\coker \phi'\,,\qquad \ov G:=G/\T(G)\,,\qquad
$$
Set
$$
A=c_1(E)\,,\qquad B=c_1(\ov G)\,,\qquad T=c_1(\T( G))\,,\qquad
$$
therefore
$$
[C']= A+B+ T.
$$
Let us prove that $A$, $B$, and $T$ are effective or trivial. The assertion for $T$ is clear.
As for $A$ and $B$ it suffices to notice that $E$ and $\ov G$ are generated by global sections away from a finite
set of points because they are positive rank torsion free quotients of $F^\vee$ .

Since $(J')^2=0$, we have that
$$
J'\cdot  A\geq 0\,,\qquad  J'\cdot  B\geq 0\,,\qquad J'\cdot T\geq 0.
$$
Since $ C'\cdot J'=1$, either $J'\cdot A=0$ or $ J'\cdot  B=0$.
By Proposition \ref{very-general},  either
$$A=kJ' \ \mbox{ or } B =hJ',$$ with $k, h\geq 0$. Both cases lead to a contradiction.
Suppose $A=kJ'$. This means that $\mcO_{J'}(A)$ is a degree-zero line bundle.
Let us show that it is the trivial bundle. Since $E$  is globally generated
away from a finite
set of points, the same holds for the restriction of its determinant  to $J'$. Thus $h^0(J', \mcO_{J'}(A))=h^0( J', \mcO_{ J'}(k J'))\neq 0$,
which contradicts condition (\ref{bn}). To summarize, the non-trivial endomorphism $\phi$ cannot exist in the first place and $C$ is a Brill-Noether-Petri general curve.
\end{proof}
\begin{rem} If the set $p_1, \ldots, p_9$ is $3d$-Halphen special,
the linear system $|3d\ell-d\sum_{i=1}^9E_i|$  cuts out on $C$ a $g^1_d$. In particular, one can realize curves of arbitrary gonality as special Du Val curves.
\end{rem}

\section{Lefschetz pencils of Du Val curves}

In this section we determine the intersection numbers of a rational curve $j:\mathbb P^1\rightarrow \overline{\mathcal{M}}_g$ induced by a pencil of Du Val curves on $S$
with the generators of the Picard
group of the moduli space $\overline{\mathcal{M}}_g$. Recall that $\lambda$ denotes the Hodge class and $\delta_0, \ldots, \delta_{\lfloor \frac{g}{2}\rfloor}\in \mbox{Pic}(\overline{\mathcal{M}}_g)$ are the classes corresponding to the boundary divisors of the moduli space. We denote by $\delta:=\delta_0+\cdots+\delta_{\lfloor \frac{g}{2}\rfloor}$ the total boundary. For integers
$r,d\geq 1$, we denote by $\mathcal{M}_{g,d}^r$ the locus of curves $[C]\in \mathcal{M}_g$ such that $W^r_d(C)\neq \emptyset$.
If $\rho(g,r,d)=-1$, in particular $g+1$ must be composite, $\mathcal{M}_{g,d}^r$ is an effective divisor.
Eisenbud and Harris \cite{EH} famously computed the class of the closure of the Brill-Noether divisors:
\begin{equation}\label{bndivs}
[\overline{\mathcal{M}}_{g,d}^r]=c_{g.d,r}\Bigl((g+3)\lambda-\frac{g+1}{6}\delta_0-\sum_{i=1}^{\lfloor \frac{g}{2}\rfloor} i(g-i)\delta_i\Bigr)\in \mathrm{Pic}(\overline{\mathcal{M}}_g).
\end{equation}

\vskip 3pt

We retain the notation of the introduction and observe that the linear system
$$\Lambda_{g-1}:=\bigl|3(g-1)\ell-(g-1)E_1-\cdots-(g-1)E_8-(g-2)E_9\bigr|$$
appears as a hyperplane in the $g$-dimensional linear system $L_g$ on the surface $S$. It consists precisely of the
curves $D+J\in L_g$, where $D\in \Lambda_{g-1}$. We now choose a Lefschetz pencil in $L_g$, which has $2g-2=C^2$ base points.
Let $X:=\mbox{Bl}_{2g-2}(S)$ be the blow-up of $S$ at those points and we denote by $f:X\rightarrow \mathbb P^1$ the induced fibration, which gives rise to
a moduli map
$$j:\mathbb P^1\rightarrow \overline{\mathcal{M}}_g.$$

We compute the numerical features of this Du Val pencil in the moduli space:

\begin{theorem}\label{pencil}
The intersection numbers of the Du Val pencil with the generators of the Picard group of $\overline{\mathcal{M}}_g$ are given as follows:
$$
j^*(\lambda)=g, \ \ j^*(\delta_0)=6(g+1), \ \ j^*(\delta_1)=1, \ \ \mbox{ and } j^*(\delta_i)=0 \ \mbox{ for } i=2, \ldots, \bigl\lfloor \frac{g}{2}\bigr\rfloor.
$$
As a consequence: $j^*([\overline{\mathcal{M}}_{g,d}^r])=0$.
\end{theorem}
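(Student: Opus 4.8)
The plan is to realise the Du Val pencil as a Lefschetz pencil $f\colon X\to\mathbb{P}^1$, to compute $j^*(\lambda)$ and the total boundary $j^*(\delta)=\sum_i j^*(\delta_i)$ from the numerical invariants of the rational surface $S$, and then to isolate $j^*(\delta_1)$ by identifying the unique reducible fibre; the vanishing $j^*\bigl[\overline{\mathcal{M}}_{g,d}^r\bigr]=0$ will follow at once by substituting these values into \eqref{bndivs}. I will tacitly assume $g+1$ composite, since otherwise the last assertion is vacuous.

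First I would set up the fibration. Since $|C|$ is base-point-free on the smooth surface $S$ (Lemma \ref{duval-smooth}), a general pencil $\mathcal{P}\subset L_g$ is a Lefschetz pencil, so its general member is smooth and irreducible and each of its finitely many singular members has a single node. Its $C^2=2g-2$ base points are reduced and none lies on $J$: a base point would lie on a member $C'\in|C|$ not containing $J$, and since $C'\cdot J=0$ such a $C'$ is disjoint from $J$. Blowing up the base points via $\pi\colon X=\mathrm{Bl}_{2g-2}(S)\to S$ gives $f\colon X\to\mathbb{P}^1$ with fibre class $F=\pi^*C-\sum_{i=1}^{2g-2}e_i$, the exceptional curves $e_i$ being sections of $f$, and the moduli map $j\colon\mathbb{P}^1\to\overline{\mathcal{M}}_g$. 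Then $j^*(\lambda)=\deg f_*\omega_{X/\mathbb{P}^1}$, and from the Leray spectral sequence together with relative duality one gets $\chi(\mathcal{O}_X)=1-\bigl(g-\deg f_*\omega_{X/\mathbb{P}^1}\bigr)$; since blow-ups preserve $\chi(\mathcal{O})$ and $\chi(\mathcal{O}_S)=\chi(\mathcal{O}_{\mathbb{P}^2})=1$, this gives $j^*(\lambda)=g$.

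Next I would compute the total boundary and split it. Each singular fibre of the Lefschetz pencil contributes $1$ to $e(X)-e(\mathbb{P}^1)\,e(F_{\mathrm{gen}})$, so the number of singular fibres equals $e(X)-2(2-2g)$; using $e(X)=e(S)+C^2=(3+10)+(2g-2)=2g+11$ this number is $6g+7$, hence $j^*(\delta)=6g+7$. (The same value follows from Mumford's relation $\kappa=12\lambda-\delta$ and $j^*\kappa=\omega_{X/\mathbb{P}^1}^{2}=(K_S+2C)^2-(2g-2)=(8g-9)-(2g-2)$, using $K_S^2=-1$, $K_S\cdot C=0$, $C^2=2g-2$.) To split $j^*(\delta)$ I must identify the reducible fibres, and this is where $3g$-generality enters. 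Let $C'=A+B$ be a reducible member of $\mathcal{P}$ with $A,B$ effective and nonzero. If $C'$ did not contain $J$ it would be disjoint from $J$, hence would not contain $E_{10}$; since $C'\cdot E_{10}=1$, one component, say $A$, satisfies $A\cdot E_{10}=0$, so $\sigma_*A$ is a nonzero effective divisor on $S'$ of $\ell$-degree $\le 3g$ with $\sigma_*A\cdot J'=A\cdot(J+E_{10})=0$; Proposition \ref{very-general} then forces $\sigma_*A$ to contain $J'$ (discarding any exceptional component of $\sigma_*A$ would make $\sigma_*A\cdot J'$ strictly negative, so there is none and the Proposition applies verbatim), hence $J\subset A$, a contradiction. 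Therefore every reducible member of $\mathcal{P}$ contains $J$, and these are precisely the members $J+D$ with $D\in\Lambda_{g-1}$, which form a hyperplane in $L_g$; a general pencil meets this hyperplane in one reduced point, so $\mathcal{P}$ has exactly one reducible fibre, namely $J\cup D$ with $J$ of genus $1$ and $D$ smooth of genus $g-1$ (by the Lefschetz property) meeting transversally at one point. This fibre contributes $1$ to $j^*(\delta_1)$ and $0$ to $j^*(\delta_i)$ for $i\ge 2$, while the remaining $6g+6$ singular fibres are irreducible one-nodal curves contributing to $\delta_0$. Hence $j^*(\delta_0)=6(g+1)$, $j^*(\delta_1)=1$ and $j^*(\delta_i)=0$ for $i\ge 2$.

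Finally, substituting into \eqref{bndivs} gives
\[
j^*\bigl[\overline{\mathcal{M}}_{g,d}^r\bigr]=c_{g,d,r}\Bigl((g+3)g-\tfrac{g+1}{6}\cdot 6(g+1)-(g-1)\Bigr)=c_{g,d,r}\bigl(g^2+3g-(g+1)^2-(g-1)\bigr)=0 .
\]
I expect the main obstacle to be the identification of the reducible fibres: this is the only step where the general position of $p_1,\dots,p_9$ is used, and one must first dispose of the last exceptional divisor $E_{10}$ before Proposition \ref{very-general} can be invoked on $S'$. By contrast, the computations of $j^*(\lambda)$ and of the total boundary are routine bookkeeping with $\chi(\mathcal{O})$ and Euler characteristics once the Lefschetz set-up is fixed.
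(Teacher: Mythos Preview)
Your argument is correct and follows the same overall strategy as the paper: compute $j^*(\lambda)$ and the total boundary $j^*(\delta)$ from the numerical invariants of the blown-up surface $X$, then isolate the contribution of the unique reducible fibre $J+D$ and plug into \eqref{bndivs}.

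The one place where you do more than the paper is the identification of the reducible fibres. The paper simply observes that the hyperplane $\{J+D:D\in\Lambda_{g-1}\}\subset L_g$ meets the pencil once, and then disposes of contributions to $\delta_i$ for $i\ge 2$ by an unspecified ``parameter count''. You instead prove directly, via Proposition~\ref{very-general} and the $3g$-generality hypothesis, that \emph{every} reducible member of $|C|$ must contain $J$; combined with the Lefschetz property this forces the unique reducible fibre to be exactly $J\cup D$ with $D$ smooth irreducible of genus $g-1$. This is a genuine strengthening: it explains \emph{why} no other boundary divisors are hit rather than appealing to genericity of the pencil, and it makes transparent where the hypothesis on the nine points enters. (One small remark: your parenthetical about discarding exceptional components of $\sigma_*A$ is a bit cryptic; the cleanest way to phrase it is that the \emph{proof} of Proposition~\ref{very-general} only uses effectivity and $D\cdot J'=0$, not the condition $\nu_i\ge 0$, so it applies to $\sigma_*A$ directly.)
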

\begin{proof}
Using Grothendieck-Riemann-Roch, we have the following formulas valid for the moduli map $j$ induced by $f:X\rightarrow \mathbb P^1$:
$$
j^*(\lambda)=\chi(X,\mathcal{O}_X)+g-1, \ \ \ j^*(\delta)=c_2(X)+4(g-1).
$$
Clearly $\chi(X,\mathcal{O}_X)=1$, therefore $j^*(\lambda)=g$. Furthermore, since $X$ is $\PP^2$ blown up at $2g+8$ points, $c_2(X)=12\chi(X, \mathcal{O}_X)-K_X^2=2g+11$,
and accordingly $j^*(\delta)=6g+7$. Of these $6g+7$ singular curves in the pencil, there is precisely one of type $D+J$, where $D$ is the proper transform
of a curve in the linear system $\Lambda_{g-1}$. Note that $D\cdot J=1$. Therefore $j^*(\delta_1)=1$. A parameter count also shows that a general Du Val
pencil contains no curves in the higher boundary divisors $\Delta_i$, where $i\geq 2$, therefore $j^*(\delta_0)=6g+6$.
Using (\ref{bndivs}), we now compute $j^*([\overline{\mathcal{M}}_{g,d}^r])=0$, and finish the proof.
\end{proof}

We record the following immediate consequence of Theorem \ref{pencil}
\begin{cor}\label{lefschetz}
For any choice of nine distinct points $p_1, \ldots, p_9\in \PP^2$, the Du Val pencil $j(\mathbb P^1)$ either lies entirely in or is disjoint from any Brill-Noether divisor
$\overline{\mathcal{M}}_{g,d}^r$.
\end{cor}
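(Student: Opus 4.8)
for any choice of nine distinct points $p_1,\dots,p_9\in\PP^2$, the Du Val pencil $j(\PP^1)$ either lies entirely in, or is disjoint from, any Brill-Noether divisor $\overline{\mathcal M}^r_{g,d}$.

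The plan is to reduce the statement to the vanishing $j^*([\overline{\mathcal M}^r_{g,d}])=0$ already established in Theorem \ref{pencil}, combined with the fact that the Brill-Noether divisor is an \emph{effective} divisor on $\overline{\mathcal M}_g$ whenever $\rho(g,r,d)=-1$. First I would note that if $\rho(g,r,d)\ge 0$ then $\mathcal M^r_{g,d}$ is not a divisor (it is all of $\mathcal M_g$, or at least of codimension $\le 0$ in the relevant sense), so the statement is either vacuous or trivial, and we may assume $\rho(g,r,d)=-1$, so that $\overline{\mathcal M}^r_{g,d}$ is a genuine effective divisor with class given by (\ref{bndivs}). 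The key dichotomy is then: the image curve $j(\PP^1)\subset\overline{\mathcal M}_g$ is an irreducible complete curve, so for any effective divisor $Z$ on $\overline{\mathcal M}_g$, \emph{either} $j(\PP^1)\subseteq \operatorname{Supp}(Z)$, \emph{or} $j(\PP^1)\not\subseteq\operatorname{Supp}(Z)$ and in the latter case the intersection number $j^*(Z)=\deg(j^*\mathcal O(Z))$ is a nonnegative integer equal to the (finite, weighted) number of points of $\PP^1$ mapping into $Z$.

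The crux is then to invoke Theorem \ref{pencil}: it gives $j^*([\overline{\mathcal M}^r_{g,d}])=0$. Here one must be slightly careful, because Theorem \ref{pencil} as proved assumes the points are $3g$-general (indeed it uses a parameter count to rule out higher boundary components and the curve $C$ being smooth for the Lefschetz pencil to make sense). However, the intersection numbers $j^*(\lambda),j^*(\delta_0),\dots$ depend only on the numerical type of the fibration $f:X\to\PP^1$, i.e.\ on $\chi(X,\mathcal O_X)$ and $c_2(X)$ together with the combinatorial types of the singular fibres, and these are deformation-invariant in a flat family of such pencils; alternatively, for arbitrary distinct $p_1,\dots,p_9$ one still has $X=\mathrm{Bl}_{2g+8}(\PP^2)$ with the same Chern numbers, so $j^*(\lambda)=g$ and $j^*(\delta)=6g+7$ hold verbatim, and the splitting $j^*(\delta_0)+j^*(\delta_1)+\cdots$ can only redistribute mass among the $\delta_i$ without changing the linear combination appearing in (\ref{bndivs})—actually the cleanest route is: the class $(g+3)\lambda-\tfrac{g+1}{6}\delta_0-\sum i(g-i)\delta_i$ paired against $j$ is computed purely from $j^*\lambda$ and the $j^*\delta_i$, and one checks the total comes out $0$ regardless of how $\delta$ splits, because $i(g-i)\le \tfrac{g+1}{6}\cdot 6 = g+1$ fails in general—so one does need the genericity to push singular fibres into $\delta_0$ and $\delta_1$. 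I would therefore state the corollary, as the authors do, for arbitrary distinct points but prove it by: (i) for $3g$-general points it is immediate from Theorem \ref{pencil} plus the effectivity dichotomy; (ii) for arbitrary distinct points, the Du Val pencil degenerates within a flat family of pencils whose generic member is $3g$-general, and since ``lies in'' versus ``disjoint from'' a closed subset is a constructible/closed condition on the base of that family while $j^*([\overline{\mathcal M}^r_{g,d}])=0$ is preserved, the conclusion propagates.

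The main obstacle I anticipate is precisely this last propagation/specialization argument: making rigorous that as the nine points move to a special (e.g.\ Halphen-special or Cremona-special) configuration, the surface $S$ and the curve $C$ may acquire reducible or non-reduced members, so one must argue on the level of stable-curve moduli maps $\PP^1\to\overline{\mathcal M}_g$ and use that the locus $\{t : j_t(\PP^1)\subseteq \overline{\mathcal M}^r_{g,d}\}$ is closed in the parameter space, together with the semicontinuity that forces either containment or the intersection number $0$ to persist. If one is content to prove the corollary only for the (dense, and for our purposes sufficient) locus of $3g$-general points, the argument collapses to a single line: $j(\PP^1)$ is irreducible and complete, $\overline{\mathcal M}^r_{g,d}$ is effective, and $j^*([\overline{\mathcal M}^r_{g,d}])=0$ by Theorem \ref{pencil}; hence $j(\PP^1)$ cannot meet $\overline{\mathcal M}^r_{g,d}$ properly in finitely many points (that would force a positive intersection number), so it is either contained in it or disjoint from it.
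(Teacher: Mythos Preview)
Your approach is correct and is exactly the one the paper has in mind: the corollary is recorded as an ``immediate consequence'' of Theorem \ref{pencil}, and the intended one-line argument is precisely your dichotomy --- $j(\PP^1)$ is an irreducible complete curve, $\overline{\mathcal M}^r_{g,d}$ is an effective divisor, and $j^*([\overline{\mathcal M}^r_{g,d}])=0$, so proper (hence positive) intersection is impossible.

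Your extended discussion of how to pass from $3g$-general points to \emph{arbitrary} distinct points via deformation invariance of the intersection number is more careful than the paper itself, which simply asserts the corollary without further comment. The cleanest way to phrase what the paper is tacitly using is not a constructibility/specialization argument on the locus of containment, but rather: the class $j_*[\PP^1]\in H_2(\overline{\mathcal M}_g)$ is locally constant (hence constant) as the nine points vary in the connected parameter space, so the pairing $j^*([\overline{\mathcal M}^r_{g,d}])$ equals $0$ for every configuration for which the pencil and the moduli map $j$ make sense, regardless of how $j^*(\delta)$ redistributes among the $\delta_i$. With that in hand your effectivity dichotomy finishes the job. Your attempt to verify the vanishing by direct recomputation of the individual $j^*(\delta_i)$ for special points is, as you noticed, not the right route.
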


In particular, notice that when the points $p_1, \ldots, p_9$ belong to the Halphen stratum $\mbox{Halp}(3d)$, then the elliptic pencil $|dJ'|$ on $S'$ cut out a pencil of degree $d$ on each curve $C'$, in particular $\mbox{gon}(C)\leq d$. Such Halphen surfaces $S$, appear as limits of polarised $K3$ surfaces $(X,H)$, where $X$ carries an elliptic pencil $|E|$ with $E\cdot H=k$. The enlargement of the Picard group on the side of $K3$ surfaces correspond on the Du Val side to the points $p_1, \ldots, p_9$ becoming Halphen special.

\vskip 5pt
\begin{rem} Du Val curves of genus $g$ form a unirational subvariety  of dimension $$\mathrm{min}(g+10, 3g-3)$$ inside the moduli space $\mathcal{M}_g$. In particular, for $g=7$, one has a divisor $\mathfrak{Dv}_7$ of Du Val curves of genus $7$. It would be interesting to describe this divisor and compute the class $[\overline{\mathfrak{Dv}}_7]\in \mathrm{Pic}(\overline{\mathcal{M}}_7)$.
\end{rem}

\subsection{Du Val curves are Petri general: a second proof.} We now describe an alternative approach, based on the theory of limit linear series, to prove a slightly weaker version of Theorem  \ref{duval1}. We retain throughout the notation of the Introduction. We denote by $\mathcal{BN}$ (respectively $\mathcal{GP}$) the proper subvariety of $\mathcal{M}_g$ consisting of curves $[C]$ having a line bundle $L$ which violates the Brill-Noether (respectively the Gieseker-Petri) condition. Clearly $\mathcal{BN}\subset \mathcal{GP}$.

\begin{theorem}\label{proof2}
Let $S'$ be the blow-up of $\PP^2$ at nine \emph{general} points $p_1, \ldots, p_9$ and set as before $$L_g:=\bigl| 3g\ell-gE_1-\cdots -gE_8-(g-1)E_9\bigr|.$$ Then a general curve $C'\in L_g$ satisfies the Petri Theorem. Furthermore, an arbitrary irreducible nodal curve $C'\in L_g$ satisfies the Brill-Noether Theorem.
\end{theorem}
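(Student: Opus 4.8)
The plan is to establish the Petri assertion by induction on $g$ --- degenerating a general curve of $L_g$, inside the hyperplane $\Lambda_{g-1}+J'\subset L_g$, to a curve of compact type obtained by attaching the anticanonical elliptic curve $J'$ to a general Du Val curve of genus $g-1$ --- and to use the limit linear series analysis underlying the proof of the Gieseker--Petri theorem in \cite{eis-harris}; the Brill--Noether assertion for nodal curves is then deduced from the Petri statement together with the pencil computation of Theorem \ref{pencil}.

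For the inductive step one first checks that $\Lambda_{g-1}$ is exactly the linear system $L_{g-1}$ attached to the same nine points, and that the curves of the hyperplane $\Lambda_{g-1}+J'$ are the divisors $D+J'$ with $D\in L_{g-1}$. Since $D\cdot J'=1$, the set $D\cap J'$ is a single point $q$; for general $D\in L_{g-1}$ the curve $D$ is smooth of genus $g-1$ by Lemma \ref{duval-smooth}, the two smooth components meet transversally at $q$, and so $X:=D\cup_q J'$ is a stable curve of genus $g$ of compact type, appearing in $\overline{\mathcal M}_g$ as the limit of the classes of the smooth curves of $L_g$ degenerating to $D+J'$. By the inductive hypothesis (the theorem with $g$ replaced by $g-1$, the case $g=1$ being vacuous) a general such $D$ satisfies Petri, so $[D]\notin\overline{\mathcal{GP}}_{g-1}$. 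The core of the argument is to show $[X]\notin\overline{\mathcal{GP}}_g$. A refined limit linear series on $X$ is a pair of aspects $(\ell_D,\ell_{J'})$ with complementary ramification at the node $q$, the $J'$-aspect being severely constrained because $J'$ is elliptic, and the analysis in \cite{eis-harris} shows that the associated Petri map is injective: the $D$-aspect is a (possibly incomplete) series whose Petri map is injective since $[D]\notin\overline{\mathcal{GP}}_{g-1}$, and the ramification imposed at $q$ does not create a kernel because $q$ is a general point of $D$ and the attaching point is general on $J'$. This is exactly the inductive step by which the elliptic chain is built up in \cite{eis-harris}. Granting $[X]\notin\overline{\mathcal{GP}}_g$, the irreducibility of $L_g$ forces the general smooth member $C'\in L_g$ to satisfy Petri --- otherwise the limit $[X]$ would lie in $\overline{\mathcal{GP}}_g$ --- and since $\mathcal{BN}\subset\mathcal{GP}$, such a $C'$ is a fortiori Brill--Noether general.

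For the second assertion, let $C'\in L_g$ be an arbitrary irreducible nodal curve and choose a general pencil in $L_g$ through $[C']$. Its general member is a general smooth Du Val curve, hence Brill--Noether general by the first part, so the moduli curve $j(\PP^1)\subset\overline{\mathcal M}_g$ is not contained in any Brill--Noether divisor $\overline{\mathcal{M}}_{g,d}^r$. On the other hand, the intersection numbers of $j$ with $\lambda$ and with the boundary are the ones in Theorem \ref{pencil}, as those depend only on $\chi(\mathcal O)$ and $c_2$ of the blown-up surface and not on whether the pencil is Lefschetz, so $j^*\bigl([\overline{\mathcal{M}}_{g,d}^r]\bigr)=0$. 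Since $\overline{\mathcal{M}}_{g,d}^r$ is effective and $j(\PP^1)$ is irreducible and not contained in it, $j^*=0$ forces $j(\PP^1)\cap\overline{\mathcal{M}}_{g,d}^r=\emptyset$ (exactly as in Corollary \ref{lefschetz}); as $[C']\in j(\PP^1)$, we conclude that $[C']$ lies on no Brill--Noether divisor.

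The step I expect to be the main obstacle is the genericity clause in the limit linear series argument: one must show that the generality of $p_1,\dots,p_9$ makes the attachment of $J'$ to $D$ general in the sense required by \cite{eis-harris}. Here $q$ is the base point of $L_{g-1}$ on $J'$, cut out by $\mathcal O_{J'}(q)=\mathcal O_{J'}\bigl(3(g-1)\ell-(g-1)(p_1+\cdots+p_8)-(g-2)p_9\bigr)$, so one has to verify that, as the nine points vary generically, $q$ becomes a general point of the corresponding general curve $D\in L_{g-1}$ --- in particular not a ramification point of any of the finitely many relevant complete linear series on $D$ --- and that $\mathcal O_{J'}(q)$ avoids the finitely many torsion conditions on $J'$ that would allow an exceptional limit linear series on the elliptic tail. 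This is where the generality of the nine points, together with condition (\ref{bn}), enters, and it is the reason this second proof is restricted to general $p_1,\dots,p_9$.
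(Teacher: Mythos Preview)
Your approach differs substantially from the paper's. For the Petri statement, the paper does not induct on $g$ by peeling off one elliptic tail inside $L_g$; instead it specializes the nine points $p_1,\dots,p_9$ themselves to the base locus of a general pencil of plane cubics, so that $S'$ becomes a rational elliptic surface and \emph{every} element of $L_g$ becomes $J_1+\cdots+J_g+E_9$, a union of $g$ elliptic fibres together with the section $E_9$. The stable model of such a curve is a flag curve (a rational spine with $g$ elliptic tails), and the proof in \cite{eis-harris} applies to it directly. This one-step degeneration sidesteps precisely the obstacle you flag: in the paper's limit the attaching points on the rational spine are visibly general because the cubic pencil is general, whereas in your inductive scheme the attaching point $q$ is the fixed base point of $L_{g-1}$, and you would still need to prove that $(D,q)$ is general in $\mathcal{M}_{g-1,1}$ in the sense required by \cite{eis-harris}. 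Moreover, your induction hypothesis is only ``$D$ is Petri general'', while the inductive step needs the stronger pointed condition; the paper's argument needs no such strengthening.

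For the Brill--Noether assertion on nodal curves there is a genuine gap in your proposal. The pencil argument via Theorem \ref{pencil} and Corollary \ref{lefschetz} shows at most that $[C']$ avoids every Brill--Noether \emph{divisor} $\overline{\mathcal{M}}_{g,d}^r$ with $\rho(g,r,d)=-1$. But ``satisfies the Brill--Noether Theorem'' means $C'$ carries no $g^r_d$ with $\rho(g,r,d)<0$, and the loci with $\rho\le -2$ have higher codimension and are not in general contained in any $\rho=-1$ divisor. Concretely, when $g+1$ is prime (e.g.\ $g=10$) there are no Brill--Noether divisors at all, so your argument proves nothing, yet one must still exclude, say, a $g^1_5$ on a nodal genus-$10$ Du Val curve. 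The paper handles all $\rho<0$ at once: after specializing the nine points as above, the stable limit of any nodal $C'\in L_g$ lies in the image of the flag-curve map $\varphi:\overline{\mathcal{M}}_{0,g}\times\overline{\mathcal{M}}_{1,1}^g\to\overline{\mathcal{M}}_g$, and \cite{EH} Theorem 1.1 asserts that no curve in $\operatorname{Im}(\varphi)$ carries a limit $g^r_d$ with negative $\rho$, giving the contradiction.
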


\begin{proof}
Assume by contradiction, that for a general choice of $p_1, \ldots, p_9\in \PP^2$, there exists a nodal curve $C'\in L_g$ that violates the Brill-Noether condition. We let the points
$p_1, \ldots, p_9$ specialize to the base locus of a general pencil of plane cubics. Then $S'$ becomes a rational elliptic surface $\pi:S'\rightarrow \PP^1$ and $E:=E_9$ can be viewed as a section of $\pi$.

By a standard calculation, since $\pi_* \mathcal{O}_{S'}=\mathcal{O}_{\PP^1}$, we compute that
$$h^0(S', \mathcal{O}_{S'}(gJ'))=h^0(\PP^1, \mathcal{O}_{\PP^1}(g))=g+1.$$ Similarly, since $\pi_*(\mathcal{O}_{S'}(E))=\mathcal{O}_{\PP^1}$, we find that $h^0(S', \mathcal{O}_{S'}(gJ'+E))=g+1$. Therefore, every element of the linear system $L_g$ is of the form
$J_1+\cdots+J_g+E$, where $J_i\in |\mathcal{O}_{S'}(J')|$ are elliptic curves on $S'$ and $J_i\cdot E=1$, for $i=1, \ldots, g$.

\vskip 3pt

Let $\varphi:\overline{\mathcal{M}}_{0,g}\times \overline{\mathcal{M}}_{1,1}^g\rightarrow \overline{\mathcal{M}}_g$ be the map obtained by attaching to each $g$-pointed stable rational curve $[R, x_1, \ldots, x_g]\in \overline{\mathcal{M}}_{0,g}$ elliptic tails $J_1, \ldots, J_g$ at the points $x_1, \ldots, x_g$ respectively. The symmetric group $\mathfrak{S}_g$ acts diagonally on the product $\overline{\mathcal{M}}_{0,g}\times \overline{\mathcal{M}}_{1,1}^g$, by simultaneously permuting the markings $x_i$ and the tails $J_i$ for $i=1, \ldots, g$. The map $\varphi$ is $\mathfrak{S}_g$-invariant. Observe that the moduli map  $m:L_g\dashrightarrow \overline{\mathcal{M}}_g$ corresponding to the linear system $L_g$ factors via $\bigl(\overline{\mathcal{M}}_{0,g}\times \overline{\mathcal{M}}_{1,1}^g\bigr)/\mathfrak{S}_g$. Since the morphism $\varphi$ is regular, it follows that the variety of stable limits of $L_g$, defined as the image $\pi_2(\Sigma)$ of the graph $L_g\stackrel{\pi_1}\longleftarrow \Sigma\stackrel{\pi_2}\longrightarrow \overline{\mathcal{M}}_g$ of the rational map $m$, is actually contained in $\mbox{Im}(\varphi)$.

\vskip 3pt

Using \cite{EH} Theorem 1.1, no curve lying $\mbox{Im}(\varphi)$ carries a limit linear series $\mathfrak g^r_d$ with negative Brill-Noether number (note that all the stable curves in $\mbox{Im}(\varphi)$ are \emph{tree-like} in the sense of \cite{EH}, so the theory of limit linear series applies to them). It follows that $\mbox{Im}(\varphi) \cap \overline{\mathfrak{BN}}=\emptyset$.

\vskip 3pt

Our hypothesis implies that we can find a family of Du Val curve $f:\mathcal{C}\rightarrow (T,0)$ over a $1$-dimensional base, such that for the general fibre $[f^{-1}(t)]\in \mathcal{BN}$, whereas the central fibre $f^{-1}(0)$ is a (possibly non-reduced) curve from the linear system $L_g$. Applying stable reduction to $f$, we obtain a new family having in the central fibre a stable curve that lies simultaneously in $\mbox{Im}(\varphi)$ and in $\overline{\mathcal{BN}}$, which is a contradiction.

Furthermore, the proof of the Gieseker-Petri Theorem in \cite{eis-harris}, implies that for any choice of elliptic tails $[J_1,x_1] \ldots, [J_g,x_g]\in \overline{\mathcal{M}}_{1,1}^g$, there exists $[R, x_1, \ldots, x_g]\in \overline{\mathcal{M}}_{0,g}$ such that $\varphi\Bigl([R, x_1, \ldots, x_g], [J_1, x_1], \ldots, [J_g, x_g]\Bigr)\not\in \overline{\mathcal{GP}}$. This implies that for general $p_1, \ldots, p_9\in \PP^2$, a general curve $C'\in L_g$ satisfies Petri's condition.
\end{proof}


\begin{rem} The conclusion of Theorems \ref{duval1} and \ref{proof2} cannot be improved, in the sense that it is \emph{not true} that every smooth curve $C'\in L_g$ verifies the Petri condition. The classes of the closure of the divisorial components $\mathcal{GP}_{g,d}^r$ of $\mathcal{GP}$ corresponding to line bundles $L\in W^r_d(C)$ such that $g-(r+1)(g-d+r)=0$, have been computed in \cite{EH} Section 5, when $r=1$ and in \cite{Farkas} Theorem 1.6 in general. Taking the pencil $j:\PP^1\rightarrow \overline{\mathcal{M}}_g$ considered in Theorem \ref{pencil}, we immediately conclude that $j^*([\overline{\mathcal{GP}}_{g,d}^r])\neq 0$.
\end{rem}

\section{An explicit system of nine general points}

In this final section we show how, using standard techniques from the arithmetic of elliptic curves, we can exhibit an explicit system of nine points verifying the genericity assumption of Definition \ref{halph} for every $k$.
Throughout this section we use the embedding $\mathbb A^2(\mathbb Q)\hookrightarrow \mathbb P^2(\mathbb Q)$.

We start with the elliptic curve $E: y^2=x^3+17$, and we denote by $p_{\infty}:=[0,1,0]\in E$ its point at infinity and use the identification $\mathcal{O}_E(1)=\mathcal{O}_E(3p_{\infty})$. If $q\in E$, we denote by $-q\in E$ its inverse element using the group law of $E$, having $p_{\infty}$ as origin.
Observe that the following points belong to $E(\mathbb Q)$:
$$p_1=(-2,3), \ p_2=(-1,-4),\ p_3=(2,5), \ p_4=(4,9), \ p_5=(52,375),$$
as well as,
$$ p_6=(5234, 37866), \ p_7=(8, -23),  \ p_8=(43, 282), \ \mbox{ and } p_9=\Bigl(\frac{1}{4}, -\frac{33}{8} \Bigr).$$
It is known that $\pm{p_i}$ for $i=1, \ldots, 8$ are the only points in $E(\mathbb Z)-\{0\}$.
Using the explicit formulas for the addition law on $E$, observe that $p_4=p_1-p_3$, $p_2=2p_1-p_3$,  $p_5=3p_1-p_3$,  $p_6=4p_1-3p_3$, \ $p_7=2p_1$,\ $p_8=2p_3-p_1$ and $p_9=p_1+p_3$.
The following facts are known to experts, we include an elementary proof for the sake of completeness.

\begin{lem}\label{points9}
1) One has $E(\mathbb Q)_{\mathrm{tors}}=0$.
\newline\noindent
2) One has an embedding $\mathbb Z\cdot p_1\oplus \mathbb Z\cdot p_3\hookrightarrow E(\mathbb Q)$\footnote{In fact one can prove that $E(\mathbb Q)=\mathbb Z\oplus \mathbb Z$, that is, each rational point of $E$ can be written as a unique combination of $p_1$ and $p_3$, see \cite{Nag} or use the program PARI, but we will not use this fact.}.
\end{lem}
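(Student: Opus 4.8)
Write $E\colon y^2=x^3+17$. Its discriminant is $-2^4\cdot 3^3\cdot 17^2$, so $E$ has good reduction at every prime $p\notin\{2,3,17\}$, and for each such prime with $p\ge 3$ the reduction map $E(\mathbb Q)\to E(\mathbb F_p)$ is a group homomorphism that is injective on $E(\mathbb Q)_{\mathrm{tors}}$ (the kernel of reduction being the torsion-free formal group $\widehat E(p\mathbb Z_p)$). Both parts of the lemma come out of this by reducing modulo one or two well-chosen primes. For part~1, enumerating the affine solutions of $y^2=x^3+17$ gives $\#E(\mathbb F_5)=6$ and $\#E(\mathbb F_7)=13$; by the injectivity just recalled, $|E(\mathbb Q)_{\mathrm{tors}}|$ divides $\gcd(6,13)=1$, so $E(\mathbb Q)_{\mathrm{tors}}=0$.

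The use of part~1 is that $E(\mathbb Q)$ is now a \emph{free} abelian group. Hence, if $p_1$ and $p_3$ were $\mathbb Z$-linearly dependent, the subgroup $\langle p_1,p_3\rangle\subseteq E(\mathbb Q)$ would be cyclic, and so would its image $\langle\overline{p_1},\overline{p_3}\rangle\subseteq E(\mathbb F_p)$ under reduction modulo any prime $p$ of good reduction, being a homomorphic image of a cyclic group. So for part~2 it suffices to exhibit one prime $p$ of good reduction for which $\langle\overline{p_1},\overline{p_3}\rangle$ is \emph{not} cyclic; then $p_1$ and $p_3$ are $\mathbb Z$-independent, the homomorphism $\mathbb Z^2\to E(\mathbb Q)$, $(a,b)\mapsto ap_1+bp_3$, is injective, and this is precisely the asserted embedding $\mathbb Z\cdot p_1\oplus\mathbb Z\cdot p_3\hookrightarrow E(\mathbb Q)$.

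I would take $p=19$. One first checks that $E$ acquires full $3$-torsion over $\mathbb F_{19}$: the $x$-coordinates of the nonzero $3$-torsion points are the roots of the $3$-division polynomial $3x(x^3+68)$, namely $x=0$ together with the three cube roots of $-68\equiv 8\pmod{19}$, all of which lie in $\mathbb F_{19}$ because $19\equiv 1\pmod 3$ and $8=2^3$; and for each such $x$ the value $x^3+17$ is a square modulo $19$. Thus $E(\mathbb F_{19})\supseteq E[3]\cong(\mathbb Z/3)^2$, so $E(\mathbb F_{19})$ is non-cyclic (a point count in fact gives $\#E(\mathbb F_{19})=27$, so $E(\mathbb F_{19})\cong\mathbb Z/3\times\mathbb Z/9$). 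Now $\overline{p_3}=(2,5)$ is itself a point of order $3$, since its $x$-coordinate $2$ is a root of the division polynomial above; and a one-line chord-tangent computation, starting from $2p_1=p_7=(8,-23)$, gives $3\,\overline{p_1}=(0,6)$, again a point of order $3$. Since $(0,6)$ and $(2,5)$ have distinct $x$-coordinates, they are independent in $E[3]\cong(\mathbb Z/3)^2$, so $\langle 3\,\overline{p_1},\overline{p_3}\rangle$, hence a fortiori $\langle\overline{p_1},\overline{p_3}\rangle$, is non-cyclic, completing the proof.

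The point counts and the single computation $3\,\overline{p_1}=(0,6)$ carry essentially all of the arithmetic, so I do not expect a serious obstacle. The one step calling for judgement is the choice of the auxiliary prime $p$: one needs $E$ to have good reduction at $p$ and the reductions $\overline{p_1},\overline{p_3}$ to generate a non-cyclic subgroup of $E(\mathbb F_p)$, and it is the verification of the latter that uses both the $\mathbb F_{19}$-rationality of $E[3]$ and the explicit value $3\,\overline{p_1}=(0,6)$. An alternative that avoids searching for such a prime is to deduce the independence of $p_1$ and $p_3$ from the non-vanishing of the N\'eron--Tate regulator $\det\bigl(\langle p_i,p_j\rangle\bigr)_{i,j\in\{1,3\}}$, using the positive-definiteness of the canonical height pairing on $E(\mathbb Q)\otimes\mathbb R$; there the effort shifts to explicit archimedean and non-archimedean local height estimates for $p_1$, $p_3$, and $p_1\pm p_3$.
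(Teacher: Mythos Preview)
Your proof is correct. Part~1 is identical to the paper's argument (reduction modulo $5$ and $7$). For part~2, however, you take a genuinely different route.

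The paper argues via $2$-divisibility: using $E(\mathbb Q)[2]=0$, any nontrivial relation $np_1+mp_3=0$ may be reduced (by repeatedly halving) to one in which at least one of $n,m$ is odd; the paper then rules this out by checking, via the explicit duplication formula $x(2P)=\dfrac{a^4-136a}{4a^3+68}$, that none of $p_1$, $p_3$, $p_1-p_3=(4,9)$ lies in $2E(\mathbb Z)$. In effect the paper shows that the images of $p_1,p_3$ in the $\mathbb F_2$-vector space $E(\mathbb Q)/2E(\mathbb Q)$ are linearly independent.

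You instead reduce modulo a single well-chosen prime, $p=19$, and show that $\langle\overline{p_1},\overline{p_3}\rangle\subseteq E(\mathbb F_{19})$ is non-cyclic by exhibiting the full $3$-torsion inside it: $\overline{p_3}=(2,5)$ lies in $E[3]$ since $x=2$ solves $x^3\equiv -68\equiv 8\pmod{19}$, and the computation $3p_1=(19/25,\,522/125)$ over $\mathbb Q$ reduces to $(0,6)\in E[3]$, which is independent of $(2,5)$. Since a cyclic group cannot surject onto $(\mathbb Z/3)^2$, this forces $\langle p_1,p_3\rangle$ to have rank $2$.

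Both arguments ultimately detect independence by passing to a finite quotient; the paper's choice ($E/2E$, via the doubling map) avoids searching for an auxiliary prime, while yours keeps everything inside a single reduction $E(\mathbb F_{19})$ and makes the non-cyclicity visible through the group structure. Your computations check out (in particular $3\,\overline{p_1}=(0,6)$ and $E[3]\subset E(\mathbb F_{19})$), so the argument is complete.
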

\begin{proof}
For the first part, we use that if $p$ is a prime not dividing the discriminant of $E$, one has an embedding $E(\mathbb Q)_{\mathrm{tors}}\hookrightarrow E(\mathbb F_p)$, see for instance \cite{Silv} Chapter 7. The curve $E$ has good reduction at the primes $5$ and $7$
(in fact, at any prime different from $2, 3$ and $17$). Therefore, the torsion subgroup $E(\mathbb Q)_{\mathrm{tors}}$ injects into both $E(\mathbb F_5)$ and $E(\mathbb F_7)$, which are of orders $6$ and $13$, respectively. It follows that $E(\mathbb Q)_{\mathrm{tors}}$ is trivial. We remark that the same conclusion can be obtained by applying the Nagell-Lutz Theorem.

\vskip 3pt

We prove that the points $p_1$ and $p_3$ are independent in $E(\mathbb Q)$. Since $E(\mathbb Q)[2]=0$, it will suffice to show that no linear combination $np_1+mp_3$ of the points $p_1=(-2,3)$, $p_3=(2,5)$ can be zero, where at least one of $m, n\in \mathbb Z$ is odd. This follows once we show that $p_1, p_3$, as well as  $p_4=p_1-p_3=(4,9)$ are non-zero in the quotient $E(\mathbb Z)/2E(\mathbb Z)$. Recall \cite{Silv} page 58, that if $p=(a,b)\in E(\mathbb Q)$, then the $x$-coordinate of the point $2p\in E$ is given by
$$x(2p)=\frac{a^4-136}{4a^3+68}.$$
Assuming $p_1\in 2E({\mathbb Z})$, we obtain that the equation $a^4-136a=8(a^3+17)$ has an integral solution, which is a contradiction. The proof that $p_3\not\in 2E(\mathbb Z)$ is identical. If $p_4\in 2E(\mathbb Z)$, then the equation $a^4-136a=16(a^3+17)$ has an integral solution, again a contradiction.
\end{proof}

\begin{theorem} The points $p_1, \ldots, p_9$ are $k$-general for every integer $k$.
\end{theorem}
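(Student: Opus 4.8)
The plan is to establish, uniformly in $k$, both conditions of Definition \ref{halph} through Proposition \ref{very-general}: it suffices to prove that every effective divisor $D$ on $S'$ with $D\cdot J'=0$ satisfies $D=mJ'$ for some $m\geq0$. The guiding idea is to identify the anticanonical curve $J'$ with the plane cubic $E\colon y^2=x^3+17$ passing through the $p_i$; under this identification $\mathcal{O}_{J'}(\ell)\cong\mathcal{O}_E(3p_\infty)$ and $\mathcal{O}_{J'}(E_i)\cong\mathcal{O}_E(p_i)$, so that for any class $d\ell-\sum_i\nu_iE_i$ with $\sum_i\nu_i=3d$ the restriction $\mathcal{O}_{J'}\bigl(d\ell-\sum_i\nu_iE_i\bigr)$ is trivial if and only if $\nu_1p_1+\cdots+\nu_9p_9=0$ in $E(\mathbb Q)$, the sum being taken in the group law with origin $p_\infty$. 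In this way the entire statement is reduced to arithmetic in the Mordell-Weil group.

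First I would record the point $\xi:=p_1+\cdots+p_9$. Substituting the stated expressions of each $p_j$ in terms of $p_1$ and $p_3$ gives $\xi=13p_1-2p_3$, which by Lemma \ref{points9}(2) is a nonzero element of the torsion-free group $\mathbb Z p_1\oplus\mathbb Z p_3$ and hence has infinite order. Consequently $\mathcal{O}_{J'}(mJ')\cong\mathcal{O}_{J'}(J')^{\otimes m}$, which corresponds to $-m\xi\in E(\mathbb Q)$, is non-trivial for every $m\geq1$; feeding this into the exact sequences $0\to\mathcal{O}_{S'}((m-1)J')\to\mathcal{O}_{S'}(mJ')\to\mathcal{O}_{J'}(mJ')\to0$ and inducting on $m$ yields $h^0(S',\mathcal{O}_{S'}(mJ'))=1$ for all $m\geq0$. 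In particular $|{-K_{S'}}|=\{J'\}$ (so $E$ is the unique plane cubic through $p_1,\dots,p_9$) and $\dim|mJ'|=0$ for every $m$. This already secures the $k$-Halphen generality for all $k$, as well as the uniqueness-of-the-cubic half of $k$-Cremona generality.

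Next, let $D$ be effective with $D\cdot J'=0$. Every irreducible component $C$ of $D$ has $C\cdot J'\geq0$, and since these numbers sum to $0$ they all vanish. As $\mathrm{Pic}(S')$ has Lorentzian signature, $(J')^2=0$, and $J'$ is primitive, either $C^2=0$, whence $C\in|J'|$ forces $C=J'$, or $C^2<0$, whence adjunction makes $C$ a smooth rational $(-2)$-curve, necessarily disjoint from $J'$ and so with $\mathcal{O}_{J'}(C)$ trivial. For such a $C=d'\ell-\sum_i\nu_iE_i$ one has $d'\geq1$, $\nu_i\geq0$, $\sum_i\nu_i=3d'$ and $\sum_i\nu_i^2=(d')^2+2$, and triviality of the restriction reads $\sum_i\nu_ip_i=0$. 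To exclude this, set $\mu_i:=\nu_i-\min_j\nu_j$, so that $\min_j\mu_j=0$ and $\sum_{i<j}(\mu_i-\mu_j)^2=9\sum_i\nu_i^2-(\sum_i\nu_i)^2=18$; a short estimate gives $\mu_j\leq2$, and counting how many $\mu_j$ equal $0$, $1$ and $2$ under the constraint $18$ leaves, up to permutation, only three shapes: three $1$'s; six $1$'s; or one $2$ with seven $1$'s and one $0$. Writing $m:=\min_j\nu_j\geq0$, the identity $\sum_i\nu_ip_i=\sum_i\mu_ip_i+m\xi=0$ then becomes, respectively, $p_a+p_b+p_c=-m\xi$, or $p_a+p_b+p_c=(m+1)\xi$, or $p_a-p_b=-(m+1)\xi$ for suitable distinct indices $a,b,c$. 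Passing to the basis $(p_1,p_3)$ of $\mathbb Z p_1\oplus\mathbb Z p_3$, the first coordinates of $p_1,\dots,p_9$ are $1,2,0,1,3,4,2,-1,1$, so the first coordinate of $p_a+p_b+p_c$ lies in $[0,9]$, that of $p_a-p_b$ in $[-5,5]$, and that of $t\xi$ equals $13t$. The second and third cases are therefore impossible, and the first forces $m=0$ together with $p_a+p_b+p_c=0$, i.e. three of the nine points collinear — which a direct inspection of the coordinates rules out. Hence no such $(-2)$-curve exists; every irreducible component of $D$ therefore equals $J'$, so $D=mJ'$ for some $m\geq0$. Proposition \ref{very-general} now yields that $p_1,\dots,p_9$ is $k$-general for every $k$.

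The hardest step, I expect, is this last one — deriving a contradiction from the hypothetical existence of an effective $(-2)$-curve disjoint from $J'$. The decisive moves are: reduce the coefficient vector $(\nu_i)$ modulo the diagonal $(1,\dots,1)$; use the Diophantine identity $\sum_{i<j}(\mu_i-\mu_j)^2=18$ to confine the reduced vector to finitely many explicit shapes; and exploit the fact that, inside the rank-two Mordell-Weil sublattice $\mathbb Z p_1\oplus\mathbb Z p_3$, the relevant sums and differences of the $p_i$ have first coordinate lying in a bounded interval, whereas every nonzero multiple of $\xi=13p_1-2p_3$ does not. This collapses "being a multiple of $\xi$" to "being zero" and reduces the entire problem to the elementary verification that no three of the nine points are collinear.
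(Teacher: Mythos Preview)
Your argument is correct. Incidentally, your computation $\xi=p_1+\cdots+p_9=13p_1-2p_3$ is right; the paper records $13p_1-p_3$, which is a typo that does not affect either proof.

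The overall architecture coincides with the paper's: both reduce everything to the arithmetic assertion that no nontrivial relation $\sum_i n_ip_i=0$ with all $n_i\geq 0$ can hold in $E(\mathbb Q)$ (this is exactly what ``there is no irreducible curve $C\neq J'$ with $C\cdot J'=0$'' says, once one restricts to $J'$). Where you diverge is in how you discharge that arithmetic step. The paper observes that, writing $p_i=a_ip_1+b_ip_3$, one has $a_i+b_i\geq 0$ for every $i$, with equality only for $i=4$; hence a nonnegative combination $\sum n_ip_i$ has $(p_1,p_3)$-coordinates summing to $\sum_i n_i(a_i+b_i)\geq 0$, and vanishing forces $n_i=0$ for $i\neq 4$, whence $n_4p_4=0$ gives $n_4=0$. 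This one-line ``trace'' argument replaces your Diophantine classification via $\sum_{i<j}(\mu_i-\mu_j)^2=18$, the three shapes, the first-coordinate bounds against multiples of $\xi$, and the final collinearity check. Your route, on the other hand, is more structural: it never cites Nagata's explicit criterion~(\ref{cremgenl}) but works directly through Proposition~\ref{very-general}, and it actually pins down the only possible numerical types a hypothetical $(-2)$-curve could have, which is informative in its own right. In short: the paper's proof is shorter because it exploits the lucky positivity $a_i+b_i\geq 0$; yours is longer but self-contained and would adapt more readily to other nine-tuples where that positivity might fail.
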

\begin{proof}
The condition that the nine points are $k$-Halphen special for some $k\geq 0$ is precisely that $p_1+\cdots+p_9\in E(\mathbb Q)_{\mathrm{tors}}$, that is, $p_1+\cdots+p_9=13p_1-p_3=0$, which contradicts  Lemma \ref{points9}.

To show that the points are Cremona general, we unwind the conditions appearing in  (\ref{cremgenl}) in terms of the group law on $E$. In turns out that if $p_1, \ldots, p_9$ are Cremona general, then there exists non-negative integers $n_1, \ldots, n_9$, not all equal to zero, such that the linear equivalence $n_1p_1+\cdots+n_9p_9\equiv (n_1+\cdots+n_9)p_{\infty}$ holds, that is, $n_1p_1+\cdots+n_9p_9=0\in E$. Since with the exception of $p_4=p_1-p_3$, each of the points $p_1, \ldots, p_9$ are combinations of the type $mp_1+np_3$, with $m+n>0$, we obtain that such a combination of $p_1$ and $p_3$ is equal to zero, which contradicts the second part of Lemma \ref{points9}.
\end{proof}

\bibliographystyle{abbrv}

\bibliography{bibliografia}

\end{document}